\documentclass[12pt,reqno]{amsart}
\usepackage{amssymb}
\usepackage{graphicx}
\usepackage{xcolor}
\usepackage{bm}

\usepackage[all]{xy}
%we only want one symbol from mathb, we don't need all of mathabx
%\usepackage{mathabx}
\DeclareFontFamily{U}{mathb}{\hyphenchar\font45}
\DeclareFontShape{U}{mathb}{m}{n}{
      <5> <6> <7> <8> <9> <10> gen * mathb
      <10.95> mathb10 <12> <14.4> <17.28> <20.74> <24.88> mathb12
      }{}
\DeclareSymbolFont{mathb}{U}{mathb}{m}{n}
\DeclareMathSymbol{\righttoleftarrow}{3}{mathb}{"FD}

\oddsidemargin 15mm
\evensidemargin 15mm
\textwidth 130mm

\theoremstyle{plain}
\newtheorem{prop}{Proposition}[section]
\newtheorem{theo}[prop]{Theorem}

\newtheorem{lemm}[prop]{Lemma}
\theoremstyle{remark}
\newtheorem{rema}[prop]{Remark}

\theoremstyle{definition}

\newtheorem{exam}[prop]{Example}
\newtheorem{ques}[prop]{Question}
\numberwithin{equation}{section}
\newcommand{\A}{{\mathbb A}}

\newcommand{\bP}{{\mathbb P}}

\newcommand{\G}{{\mathbb G}}

\newcommand{\R}{{\mathbb R}}
\newcommand{\Z}{{\mathbb Z}}

\newcommand{\cG}{{\mathcal G}}
\newcommand{\cO}{{\mathcal O}}
\newcommand{\cI}{{\mathcal I}}
\newcommand{\cJ}{{\mathcal J}}

\newcommand{\cL}{{\mathcal L}}

\newcommand{\cT}{{\mathcal T}}

\newcommand{\fA}{{\mathfrak A}}

\newcommand{\rH}{{\mathrm H}}

\newcommand{\GL}{{\mathrm{GL}}}

\newcommand{\rC}{{\mathrm C}}

\newcommand{\bG}{{\mathbb G}}

\newcommand{\bN}{{\mathbb N}}

\newcommand{\bR}{{\mathbb R}}
\newcommand{\bZ}{{\mathbb Z}}

\newcommand{\fS}{{\mathfrak S}}

\newcommand{\eqto}{\stackrel{\lower1.5pt\hbox{$\scriptstyle\sim\,$}}\to}
\newcommand{\eqdashto}{\stackrel{\lower1.5pt\hbox{$\scriptstyle\sim\,$}}\dashrightarrow}
\newcommand{\actsfromleft}{\mathrel{\reflectbox{$\righttoleftarrow$}}}
\newcommand{\actsfromright}{\righttoleftarrow}

\DeclareMathOperator{\Gal}{Gal}

\DeclareMathOperator{\Pic}{Pic}
\DeclareMathOperator{\Spec}{Spec}

\DeclareMathOperator{\Aut}{Aut}

\DeclareMathOperator{\Burn}{Burn}

\DeclareMathOperator{\Stab}{Stab}
\DeclareMathOperator{\Ind}{Ind}

\begin{document}
\title[Equivariant Burnside groups]{Equivariant Burnside groups and toric varieties}

\author{Andrew Kresch}
\address{
  Institut f\"ur Mathematik,
  Universit\"at Z\"urich,
  Winterthurerstrasse 190,
  CH-8057 Z\"urich, Switzerland
}
\email{andrew.kresch@math.uzh.ch}
\author{Yuri Tschinkel}
\address{
  Courant Institute,
  251 Mercer Street,
  New York, NY 10012, USA
}

\email{tschinkel@cims.nyu.edu}

\address{Simons Foundation\\
160 Fifth Avenue\\
New York, NY 10010\\
USA}

\date{December 9, 2021}

\begin{abstract}
We study $G$-equivariant birational geometry of toric varieties, where $G$ is a finite group.
\end{abstract}

\maketitle

\section{Introduction}
\label{sec.intro}

In this paper we continue our studies of actions of finite groups on algebraic varieties, up to equivariant birational equivalence. Our main tool is the new invariant introduced in \cite{BnG}: 
to an $n$-dimensional smooth projective variety $X$ with a regular action of a finite group $G$ we assigned a class
$$
[X\actsfromright G] \in \Burn_n(G),
$$
taking values in the {\em equivariant Burnside group}, which is defined by certain symbols and relations; this class is an equivariant birational invariant. In \cite{HKTsmall}, \cite{KT-struct} we studied various structural properties of this new invariant and provided first applications. In \cite{KT-vector} we presented an algorithm to compute the classes of linear actions and presented examples of nonbirational such actions. 
Here, we turn to algebraic tori.

Recall that an algebraic torus of dimension $n$ over a field $k$ is a linear algebraic group $T$ which is a $k$-form of $\mathbb G_m^n$. The absolute Galois group of $k$
acts on the geometric character group $M:=\mathfrak X^*(T_{\bar{k}})$ via a finite subgroup
\[ G\subset \Aut(M)\cong \mathrm{GL}_n(\Z). \]
A torus $T$ over $k$ is uniquely determined up to isomorphism by 
its splitting field, Galois over $k$, with Galois group $G$, and
this representation of $G$. 

Rationality properties of tori over nonclosed fields have been extensively studied, see, e.g., \cite{vosk}, \cite{EM}, \cite{Sansuc-CT}, \cite{hoshi}, \cite{lemire-tori}.
The relevant cohomological obstruction results from the Galois module $\Pic(X_{\bar k})$, for a smooth projective compactification $X$ of $T$. 
For any subgroup $G'\subseteq G$ the group
\begin{equation}
\label{eqn:group}
\rH^1(G', \Pic(X_{\bar{k}}))
\end{equation}
is independent of the choice of $X$; its nontriviality is
an obstruction to stable $k$-rationality of $T$. This is the only obstruction in dimensions $\le 3$; moreover, every stably rational torus in dimension $\le 3$ is rational  \cite{kun}. 
The Zariski problem for algebraic tori, i.e., the question of whether or not stably rational tori over $k$ are rational over $k$ is still open, in particular, for 4-dimensional tori identified in \cite[Prop. 4.15]{lemire-tori}, with $G$ a subgroup of $C_2\times \fA_5$ or 
$C_2\times \fS_4$.

Here, over an algebraically closed field $k$ of characteristic zero, we study the $G$-equivariant version of this question, for a finite group $G$:

\medskip

\noindent
{\bf Problem:}
{\it Is a given $G$-action on an algebraic torus linearizable, i.e., $G$-equivariantly birational to a linear action on projective space? }

\medskip

There are many similarities but also subtle distinctions between these points of view, highlighted, e.g., in \cite{HT-intersect}.
One of the similarities is that the cohomological obstruction \eqref{eqn:group} applies also as an obstruction to (stable) linearizability of the $G$-action, since for linear actions of $G$, the invariant \eqref{eqn:group} vanishes (see, e.g., \cite[Prop. 2.2]{BogPro} and references therein).  
On the other hand, all tori in dimension 2, over any field, are rational, while there is an action of $G:=C_2\times \fS_3$ on $\mathbb G_m^2$, which is not linearizable \cite{isk-s3}, but is stably linearizable \cite[Prop. 9.11]{lemire}; the corresponding class in $\Burn_2(G)$ is distinct from those of linear actions \cite[Section 7.6]{HKTsmall}.    

In the case of surfaces both points of view have received ample attention, going back to \cite{manin-ihes}, \cite{IskMin}, with further developments in \cite{DI}, \cite{BogPro}, \cite{prokhorovII}, and in many other papers. The main approach there is via the (equivariant) Minimal Model Program, i.e., classification of all birational models and (equivariant) birational transformations between those models.  
Much less is known about linearizability of $G$-actions 
in higher dimensions, in particular, for tori, see \cite{Ch-toric}.

Our main results in this paper are: 
\begin{itemize}
\item We give a recursive procedure (Theorem~\ref{thm.main}) to compute the class 
\begin{equation}
\label{eqn:class}
[X\actsfromright G] \in \Burn_n(G). 
\end{equation}
This uses the De Concini-Procesi formalism to construct a suitable equivariant birational model of the torus $T$.
\item We present an example of such a computation (Proposition~\ref{prop:class-dp6}). 
\item We discuss the relation between 
the class \eqref{eqn:class} and existing (stable) $G$-birational invariants, such as group cohomology \eqref{eqn:group}: there exist actions that can be distinguished by one invariant but not the other
(Proposition~\ref{prop:class-dp6} and Proposition~\ref{prop:kun}). 
%\item We find new examples of nonlinearizable actions in dimension 3. 
\end{itemize}

\

\noindent
{\bf Acknowledgments:} 
The first author was partially supported by the Swiss National Science Foundation. The second author was partially supported by NSF grant 2000099.
This paper is based upon work partially supported by the Swedish Research Council under grant no.~2016-06596 while the first author was in residence at the Institut Mittag-Leffler.

\section{Toric varieties: generalities}
\label{sec.generalities}

We work over an algebraically closed field $k$ of characteristic zero.

\subsection{Fans}
\label{sect:lf}
Let $T=\bG_m^n$ be an algebraic torus of dimension $n$, 
$$
M:=\mathfrak X^*(T), \quad \text{ respectively } \quad N:=\mathfrak X_*(T),
$$
the lattice of its algebraic characters, respectively, co-characters. 
A smooth projective toric variety 
$$
X=X_{\Sigma}
$$ 
of dimension $n$ is an equivariant compactification of $T$. It is uniquely determined the combinatorial structure of a fan 
$$
\Sigma=\{ \sigma\},
$$
a finite collection of strongly convex rational polyhedral cones $\sigma$ in
$N_{\R}:=N\otimes_{\Z}\R$ (see, e.g., \cite{fulton} for basic definitions concerning toric varieties). We let 
$$
\Sigma(d), \quad d=0,\dots, n,
$$
denote the collection of 
$d$-dimensional cones in $\Sigma$. 

The fan is subject to various conditions to insure smoothness and projectivity of $X$; see, e.g., \cite{batyrev}: 
\begin{itemize}
\item every cone $\sigma\in\Sigma$ is simplicial and is generated by a part of a basis of $N$, 
\item 
the union of cones is all of $N_{\bR}$, and 
\item 
$\Sigma$ admits a piecewise linear convex support function.
\end{itemize}
Such a fan  $\Sigma$ is called a smooth projective fan.

\subsection{Subtori and their closures} 
\label{sect:sc}
A primitive sublattice $N'\subseteq N$ gives rise to a {\em subtorus} $T'\subset T$, and an induced equivariant compactification $X'$ of $T'$. The corresponding fan $\Sigma'$ is the fan in $N'_{\bR}$ induced by $\Sigma$, that is, given by intersecting the cones of $\Sigma$ with $N'_{\bR}$. 
The interesting case for us is when $T'$ satisfies the following 

\medskip
\noindent
{\bf Property (E):} 
$\sigma\cap N'_{\bR}$ is a face of $\sigma$, for all $\sigma\in \Sigma$ (see  \cite{DGwonderful}).

\medskip
\noindent
Equivalently, every
$\sigma\in \Sigma$ has strongly convex image under the projection 
$$
N_{\bR} \to (N/N')_{\bR}.
$$

By \cite[Thm.\ 3.1]{DGwonderful},
property $(\mathrm{E})$ for $T'$, with respect to $\Sigma$, implies that
$X'$ is nonsingular, isomorphic to the toric variety
$X_{\Sigma'}$.
Then we have a closed immersion
\begin{equation}
\label{eqn.XSigmaprime}
X_{\Sigma'}\to X,
\end{equation}
given on affine charts, for $\sigma\in \Sigma'$, by ring homomorphisms
\[ k[\sigma^\vee \cap M]\to k[(\sigma^\vee \cap M)/(N'^\perp\cap M)]. \]
Generally, by \cite[Thm.\ 4.1]{DGwonderful}, after a suitable finite 
subdivision of $\Sigma$, one can insure 
property $(\mathrm{E})$ for a given subtorus, 
or any finite collection of subtori of $T$.

\subsection{Quotient tori and orbit closures}
\label{sect:qo}
A primitive sublattice $N'\subseteq N$ also gives rise to a quotient torus $T/T'$.
The case of interest to us is the quotient torus $T^\sigma$, associated with the sublattice $N_\sigma$ spanned by generators of a cone $\sigma\in \Sigma$; by the standing smoothness assumption on fans, $\sigma$ is generated by a part of a basis of $N$. 
Furthermore,
$\sigma$ determines an orbit $D_{\sigma}^\circ$,
the closed $T$-orbit in the corresponding affine chart $\Spec(k[\sigma^\vee\cap M])$ of $X$.
We denote its closure in $X$ by $D_{\sigma}$.
This is a smooth projective toric variety, whose fan is obtained as follows \cite[Thm. 3.2.6]{cox-book}:
\begin{itemize}
\item $\mathfrak X^*(T^\sigma) = {\sigma}^\perp \cap M$, which is dual to $N/N_{\sigma}$, 
\item for cones $\tau\supseteq \sigma$ let 
$$
\bar{\tau} := (\tau + \bR\sigma)/\bR \sigma \subseteq (N/N_{\sigma})_{\bR}
$$
be the induced cone in the quotient, these form 
a smooth projective fan $\Sigma^{\sigma}$.
\item We have a closed immersion $X_{\Sigma^{\sigma}}\hookrightarrow X$ with image $D_{\sigma}$.
On respective affine charts
$\Spec(k[\sigma^\perp\cap \tau^\vee\cap M])$ and
$\Spec(k[\tau^\vee\cap M])$,
for $\tau\supseteq \sigma$,
this is given by the surjective ring homomorphism
\[
k[\tau^\vee\cap M]\to k[\sigma^\perp\cap \tau^\vee\cap M],
\]
with kernel generated by characters in $\tau^\vee$, not in $\sigma^\perp$.
In particular, we have a canonical isomorphism
\[ T^{\sigma}=\Spec(k[\sigma^\perp\cap M])\cong D^\circ_{\sigma} \]
\item The projection $T\to T^{\sigma}$ determines a rational map $X\dashrightarrow X_{\Sigma^{\sigma}}$,
which is defined as a morphism on the
union $U^\sigma$ of affine charts of $X$ corresponding to cones $\tau\supseteq \sigma$.
This morphism
\begin{equation}
\label{eqn.smoothmorphism}
U^\sigma\to X_{\Sigma^\sigma}\cong D_\sigma
\end{equation}
is smooth, given by the injective ring homomorphisms
\[
k[\sigma^\perp\cap \tau^\vee\cap M]\to k[\tau^\vee\cap M].
\]
\end{itemize}

\subsection{Transversality of intersections}
\label{sect:ti}
By our assumption we have
\[ X\setminus T=\bigcup_{\rho\in \Sigma(1)}D_\rho, \]
a simple normal crossing divisor in $X$.
For $\sigma\in \Sigma(d)$, we have
$D_\sigma$ of codimension $d$ in $X$, with transverse intersection
\[
D_\sigma=\bigcap_{j=1}^{d} D_{\rho_j},
\]
where $\rho_j\in \Sigma(1)$,
$j=1$, $\dots$, $d$, are the rays spanning $\sigma$.

\begin{lemm}
\label{lem.closure}
Let $T'\subset T$ be a subtorus satisfying property $(\mathrm{E})$ with respect to $\Sigma$, and let $X'$ be the closure of $T'$ in $X=X_\Sigma$.
The morphism $T\to T/T'$ extends to a smooth $T$-equivariant morphism to $T/T'$ from a $T$-invariant neighborhood of $X'$ in $X$, with fiber $X'$ over $1\in T/T'$.
As well, $X'$
has transverse intersection with the boundary $X\setminus T$.
\end{lemm}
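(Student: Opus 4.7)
The plan is to construct the extension affine-locally and then glue. By property $(\mathrm{E})$, the subset $\Sigma':=\{\tau\in\Sigma : \tau\subseteq N'_{\bR}\}$ is a subfan of $\Sigma$ and agrees with the fan of $X'=X_{\Sigma'}$ identified earlier. Set $V:=\bigcup_{\tau\in\Sigma'}U_\tau$, an open $T$-invariant subset of $X$. First one checks $X'\subseteq V$: the $T'$-orbit of $X'$ indexed by $\tau\in\Sigma'$ lies in the $T$-orbit $D^\circ_\tau\subseteq U_\tau$, so the chart decomposition of $X_{\Sigma'}$ covers $X'$.

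Next, on each chart $U_\tau=\Spec k[\tau^\vee\cap M]$ with $\tau\in\Sigma'$, the inclusion $\tau\subseteq N'_{\bR}$ yields $N'^\perp\subseteq\tau^\vee$, which produces a ring homomorphism $k[N'^\perp\cap M]\hookrightarrow k[\tau^\vee\cap M]$ and hence a $T$-equivariant morphism $U_\tau\to T/T'$ extending $T\to T/T'$. These local morphisms agree on overlaps and give the desired morphism $V\to T/T'$. For smoothness and the identification of the fiber, I would pick at each $\tau\in\Sigma'$ a basis $v_1,\dots,v_d$ of $\tau$ extending successively to bases of $N'$ and of $N$ (possible because $\tau$ is a smooth cone and $N'$ is primitive, so $N/N'$ is torsion-free). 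In the dual basis $e^1,\dots,e^n$ of $M$, one obtains $U_\tau\cong\A^d\times\G_m^{d'-d}\times\G_m^{n-d'}$ (with $d':=\dim N'$), under which $U_\tau\to T/T'\cong\G_m^{n-d'}$ is projection onto the last factor. Smoothness is then immediate, and the fiber over $1$ is $\A^d\times\G_m^{d'-d}\times\{1\}=U_\tau\cap X'$, patching to $X'$ globally.

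Transversality with the boundary follows from the same local picture. Any ray of a cone in $\Sigma'$ lies in $\Sigma'$ itself, so $D_\rho\cap V=\emptyset$ for $\rho\in\Sigma(1)\setminus\Sigma'(1)$, and these boundary divisors do not meet $X'$. For $\rho=\rho_i\in\Sigma'(1)$ spanned by $v_i$ (with $i\leq d$), the divisor $D_{\rho_i}\cap U_\tau$ is cut out by $e^i=0$, whereas $X'\cap U_\tau$ is cut out by $\{e^j-1:j>d'\}$; these use disjoint variables, so the intersection is transverse, and more generally $X'$ meets every boundary stratum $D_\sigma$ transversally. The only real subtlety is choosing the correct $T$-invariant neighborhood $V$; once this is pinned down, the smoothness, fiber computation, and transversality all fall out of the adapted-basis product decomposition enabled by property $(\mathrm{E})$, the smoothness of $\Sigma$, and the primitivity of $N'$.
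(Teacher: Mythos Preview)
Your proof is correct and follows essentially the same route as the paper's: take $V=\bigcup_{\tau\in\Sigma'}U_\tau$ as the $T$-invariant neighborhood, build the map to $T/T'$ chart by chart via $k[N'^\perp\cap M]\hookrightarrow k[\tau^\vee\cap M]$, and read off smoothness, the fiber over $1$, and transversality from an adapted-basis product decomposition of each $U_\tau$. The paper argues only on \emph{maximal} cones $\sigma\in\Sigma'$ (where $\sigma^\perp=N'^\perp$), giving $U_\sigma\cong\A^{n'}\times\G_m^{n-n'}$, while you treat all $\tau\in\Sigma'$ with the finer decomposition $\A^d\times\G_m^{d'-d}\times\G_m^{n-d'}$; this is a cosmetic difference. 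One small expository point: when you justify the successive basis extension, your parenthetical covers only the step $N'\subset N$; the step $N_\tau\subset N'$ needs the observation that $N_\tau$, being saturated in $N$ (as $\tau$ is smooth), is a fortiori saturated in $N'$.
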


\begin{proof}
Let $N'\subseteq N$ be the corresponding primitive sublattice, with corresponding fan $\Sigma'$ in $N'_{\R}$.
By property $(\mathrm{E})$, the cones of $\Sigma'$ are already in $\Sigma$.
The variety $X'$ is contained in the union of affine charts of $X$ associated with
cones of $\Sigma$, that belong to $\Sigma'$, by the algebraic description of the closed immersion \eqref{eqn.XSigmaprime}.
This union is a $T$-invariant neighborhood of $X'$.
Let $\sigma$ be a maximal cone of $\Sigma'$.
Now we have a $T$-equivariant morphism
\[ \Spec(k[\sigma^\vee\cap M])\to \Spec(k[\sigma^\perp\cap M]), \]
extending $T\to T/T'$, and these patch to give the desired morphism, which is smooth.
We have
\[ \Spec(k[\sigma^\vee\cap M])\cong \A^{n'}\times \G_m^{n-n'}, \]
where $n'$ denotes the rank of $N'$.
Now
\[ X'\cap \Spec(k[\sigma^\vee\cap M])\cong \A^{n'}\times\{1\} \]
meets the complement of $\G_m^n$ transversely.
\end{proof}

\begin{prop}
\label{prop:trans}
Let $T'\subset T$ be a subtorus satisfying
property $(\mathrm{E})$ with respect to $\Sigma$,
and let $X'$ be the closure of $T'$ in $X$.
Let $\sigma\in \Sigma$ be such that 
the sublattice $N'\subset N$
corresponding to $T'$
satisfies 
$\sigma\subset N'_\R$.
Then:
\begin{itemize}
\item The intersection of $X'$ with $D_\sigma^\circ\cong T^\sigma$ is the subtorus $T'^{\sigma}\subset T^{\sigma}$ associated with
\[ N'/N_\sigma\subseteq N/N_\sigma. \]
\item The subtorus $T'^{\sigma}\subset T^{\sigma}$
satisfies property $(\mathrm{E})$ with respect to $\Sigma^\sigma$, and
the intersection of $X'$ with $D_\sigma\cong X_{\Sigma^\sigma}$ is
$X_{\Sigma'^\sigma}$, where $\Sigma'$ is the fan in $N'_\R$, induced from $\Sigma$, and $\sigma\in \Sigma'$ determines the fan $\Sigma'^\sigma$ in $(N'/N_\sigma)_\R$.
\item We have
$X'\cap U^\sigma$ equal to the
pre-image of $X_{\Sigma'^\sigma}$ under the smooth morphism \eqref{eqn.smoothmorphism}.
\end{itemize}
\end{prop}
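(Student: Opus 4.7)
The plan is to verify all three statements by direct computation on affine toric charts, using the explicit ring homomorphisms recorded in Sections 2.2 and 2.3. The crucial observation throughout is that $\sigma\subset N'_\R$ implies $N_\sigma\subseteq N'$, hence $N'^\perp\subseteq\sigma^\perp$ as sublattices of $M$; this containment will repeatedly collapse potential obstructions. For the first assertion I restrict to the chart $U_\sigma=\Spec(k[\sigma^\vee\cap M])$, where $D_\sigma^\circ=T^\sigma$ is cut out by the ideal generated by $\chi_m$ for $m\in(\sigma^\vee\cap M)\setminus\sigma^\perp$, while $X'\cap U_\sigma$ is the quotient by the ideal generated by $\chi_m-\chi_{m'}$ for $m,m'\in\sigma^\vee\cap M$ with $m-m'\in N'^\perp\cap M$. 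Any ``mixed'' relation (with exactly one of $m,m'$ in $\sigma^\perp$) would force $m'\in\sigma^\perp+N'^\perp=\sigma^\perp$, a contradiction, so the coordinate ring of $X'\cap D_\sigma^\circ$ is $k[(\sigma^\perp\cap M)/(\sigma^\perp\cap N'^\perp\cap M)]$. Identifying $\sigma^\perp\cap N'^\perp$ with $(N'/N_\sigma)^\perp$ inside the character lattice $\sigma^\perp\cap M$ of $T^\sigma$ yields precisely the subtorus $T'^\sigma$.

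For the second bullet I first verify property $(\mathrm{E})$: for every $\tau\in\Sigma$ with $\tau\supseteq\sigma$ the set $\bar\tau\cap(N'/N_\sigma)_\R$ equals the image of $\tau\cap N'_\R$ under the projection $N_\R\to(N/N_\sigma)_\R$. The inclusion $\supseteq$ is clear, and for $\subseteq$ any lift $v\in\tau$ of an element of $(N'/N_\sigma)_\R$ satisfies $v\in N'_\R+(N_\sigma)_\R=N'_\R$ since $(N_\sigma)_\R\subseteq N'_\R$. Because $\tau\cap N'_\R$ is a face of $\tau$ (by property $(\mathrm{E})$ for $T'$) and contains $\sigma$, its image in $\bar\tau$ is a face. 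The identification $X'\cap D_\sigma=X_{\Sigma'^\sigma}$ then follows by repeating the analysis of the first bullet in each chart $\bar U_{\bar\tau}$: the image in $k[\sigma^\perp\cap\tau^\vee\cap M]$ of the ideal defining $X'\cap U_\tau$ is generated by $\chi_n-\chi_{n'}$ with $n,n'\in\sigma^\perp\cap\tau^\vee\cap M$ and $n-n'\in N'^\perp$, which is precisely the ideal cutting out $X_{\Sigma'^\sigma}$ in $\bar U_{\bar\tau}$, the mixed case being again excluded by $N'^\perp\subseteq\sigma^\perp$.

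For the third bullet I work in the chart $U_\tau$ for $\tau\supseteq\sigma$, where the smooth morphism of Section 2.3 is given by the inclusion $k[\sigma^\perp\cap\tau^\vee\cap M]\hookrightarrow k[\tau^\vee\cap M]$. The preimage ideal of $X_{\Sigma'^\sigma}$ is generated by $\chi_n-\chi_{n'}$ with $n,n'\in\sigma^\perp\cap\tau^\vee\cap M$ and $n-n'\in N'^\perp$; the ideal defining $X'\cap U_\tau$ is generated by $\chi_m-\chi_{m'}$ with $m,m'\in\tau^\vee\cap M$ and $m-m'\in N'^\perp$. One inclusion is immediate, and the content of this bullet, which I expect to be the main obstacle, is the reverse. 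Here I use smoothness of $\Sigma$: choose a basis of $N$ with $\sigma=\text{cone}(e_1,\dots,e_d)$ extending to $\tau=\text{cone}(e_1,\dots,e_{d'})$, and split any $m\in\tau^\vee\cap M$ as $m=m^\flat+m^\sharp$ with $m^\flat$ supported on $\{e_1^*,\dots,e_d^*\}$ and $m^\sharp\in\sigma^\perp$; both summands again lie in $\tau^\vee\cap M$. Since $m-m'\in N'^\perp\subseteq\sigma^\perp$ forces $m^\flat=m'^\flat$, the factorisation $\chi_m-\chi_{m'}=\chi_{m^\flat}(\chi_{m^\sharp}-\chi_{m'^\sharp})$ exhibits $\chi_m-\chi_{m'}$ as an element of the preimage ideal, completing the argument.
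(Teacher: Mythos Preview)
Your proof is correct and follows essentially the same approach as the paper: direct computation on affine toric charts using the ring homomorphisms recorded in \S\ref{sect:sc}--\ref{sect:qo}, together with the key containment $N'^\perp\subseteq\sigma^\perp$. The only notable difference is in the third bullet, where the paper simply asserts that the relevant commutative square of affine schemes is ``easily seen'' to be cartesian, while you supply the explicit coordinate splitting $m=m^\flat+m^\sharp$ to verify the required ideal containment; this is a useful elaboration of the same argument rather than a different method.
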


\begin{proof}
In the affine chart $\Spec(k[\sigma^\vee\cap M])$ of $X$ associated with $\sigma$, we have closed subvarieties $D_\sigma^\circ$, defined by the characters in $\sigma^\vee\cap M$, not in $\sigma^\perp$, and the intersection with $X'$, given by equating characters whose difference lies in $N'^\perp\cap M$.
So the coordinate ring of the intersection is obtained by setting to zero all characters not in $\sigma^\perp$ and equating characters whose difference lies in $N'^\perp\cap M$;
this gives $k[(\sigma^\perp\cap M)/(N'^\perp\cap M)]$.
The first statement is established, since the subtorus of
$T^\sigma=\Spec(k[\sigma^\perp\cap M])$ associated with $N'/N_\sigma$ is
$\Spec(k[(\sigma^\perp\cap M)/(N'^\perp\cap M)])$.

For the second statement, we want to show, for $\tau\in\Sigma$ with $\tau\supset\sigma$, that $\bar\tau\cap (N'/N_\sigma)_\R$ is equal to $\bar\omega$ for some
$\omega\in\Sigma$ with $\omega\supset\sigma$.
We take $\omega:=\tau\cap N'_\R$;
since $T'$ satisfies property $(\mathrm{E})$ we have $\omega\in \Sigma$.
Since $\sigma\subset N'_\R$, we have $\omega\supset \sigma$.
Now
\[ (\tau+(N_\sigma)_\R)\cap N'_\R=\tau\cap N'_\R+(N_\sigma)_\R=\omega+(N_\sigma)_\R. \]
So $\bar\tau\cap (N'/N_\sigma)_\R=\bar\omega$, as desired.

We treat the remainder of the second statement, and the third, by an analysis on coordinate charts.
Let $\tau\in \Sigma'$, with $\tau\supset\sigma$.
In the affine chart $\Spec(k[\tau^\vee\cap M])$, we have $D_\sigma$ defined by characters in $\tau^\vee\cap M$, not in $\sigma^\perp$, and the intersection with $X'$, given by equating characters whose difference lies in $N'^\perp\cap M$.
This gives a description of the coordinate ring of the intersection as
\[ k[(\sigma^\perp\cap \tau^\vee\cap M)/(N'^\perp\cap M)]. \]
The same coordinate ring arises when we apply the description of the subtorus closure to $T'^\sigma\subset T^\sigma$.
By considering the same coordinate rings, and the injective ring homomorphisms corresponding to \eqref{eqn.smoothmorphism} for $\tau\in \Sigma$ and $\tau\in \Sigma'$, we obtain a commutative diagram of affine schemes which we see easily to be cartesian, and thereby obtain the third statement.
\end{proof}

\subsection{Equivariant combinatorics}
\label{sect:ec}
Let $G$ be a finite group.
A regular (right) $G$-action on a toric variety $X=X_{\Sigma}$ determines a representation 
\begin{equation}
\label{eqn:rep}
G\to \GL_n(\bZ)=\Aut(M).
\end{equation}
We are mainly interested in cases when this is {\em injective}.
The homomorphism \eqref{eqn:rep} determines a right action on the cocharacter lattice $N$.
The induced action on $N_\R$ leaves the
fan $\Sigma$ invariant, i.e., $\sigma\cdot g\in \Sigma$, for all $\sigma\in \Sigma$
and $g\in G$.

Since we are interested in equivariant birational types, we can start with a faithful representation \eqref{eqn:rep}.
By \cite{CTHS}, there exists a smooth projective fan $\Sigma$, that is invariant under the $G$-action.
Correspondingly, $G$ acts on $X_\Sigma$, with associated representation \eqref{eqn:rep}.

Suppose we are given a finite collection of subtori of $T$.
By combining \cite[Thm.\ 4.1]{DGwonderful} and \cite{CTHS}, we achieve, after a suitable subdivision of $\Sigma$, property $(\mathrm{E})$ for all of the subtori, still requiring $G$ to act regularly on the toric variety.
Notice that any further subdivision of $\Sigma$ preserves property $(\mathrm{E})$, for the given collection of subtori.

After a further $G$-equivariant subdivision of $\Sigma$, we may suppose, that the boundary $X\setminus T$ may be written as a union
$$
X\setminus T=\bigcup_{i\in \mathcal I} \, D_i, \qquad \mathcal I:=\{1,\dots,\ell\},
$$
where $D_i$ is a nonsingular $G$-invariant divisor, for all $i$.
This is achieved by iterated star subdivision along cones,
where the collection of generators is contained in an orbit of $\Sigma(1)$ and is maximal, with this property.

\section{Stabilizer stratification and subdivisions}

\subsection{Background}
\label{sect:background}
We recall basic terminology concerning {\em toric arrangements}, studied in, e.g., \cite{DP-toric}, \cite{DGP}, \cite{DG1}, \cite{DGwonderful}, \cite{DG3}, \cite{Callegaro}, \cite{berg}.

The main motivation for the introduction of the combinatorial formalism below was the computation of the cohomology ring of the {\em complement} of the arrangement; the relevant notions are: 
\begin{itemize}
\item Morgan differential algebra \cite{morgan},
\item Orlik-Solomon algebra \cite{orlik-solomon}.
\end{itemize}
We start with a torus $T$. The ingredients are:
\begin{itemize}
\item {\em Layers}, i.e., cosets of subtori of $T$.
\item {\em Toric arrangement}, i.e., finite collections of layers.
\item {\em Saturations} of toric arrangements, obtained by adding all connected components of intersections of layers.
\end{itemize}
The main result of \cite{DP-toric} and \cite{DG3} is that the {\em rational homotopy type} of the complement of a toric arrangement
only depends on discrete data associated with the toric arrangement.

Projective {\em wonderful models}  for toric arrangements are constructed in \cite{DGwonderful},
building on the wonderful models of subspace arrangements \cite{DP} and conical arrangements \cite{macphersonprocesi}.
These are defined as the closures of complements of toric arrangements in products of
\begin{itemize}
\item a nonsingular projective toric variety $X=X_\Sigma$, such that the subtori associated with the given layers satisfy property $(\mathrm{E})$ with respect to $\Sigma$, and
\item the blow-ups of $X$ along the closures of the layers.
\end{itemize}

\subsection{Arrangements of diagonalizable groups}
\label{sect:arr-diag}
The arrangements, relevant for our applications, are on the one hand less general than those in \S \ref{sect:background}, in that layers that arise are translates of subtori by \emph{torsion} elements of $T$.
On the other hand our inductive scheme requires arrangements in more general \emph{diagonalizable algebraic groups}, than just tori.

Let $\Delta$ be a diagonalizable algebraic group over $k$. The identity component $T\subset \Delta$ fits into an exact sequence
\begin{equation}
\label{eqn.DeltamodT}
1\to T\to \Delta \to \Delta/T\to 1,
\end{equation}
which is (noncanonically) split.
The subgroup $T$ is an algebraic torus, and the quotient $\Delta/T$ is a finite abelian group.

A (right) action of a
finite group $G$ on $\Delta$, by automorphisms, is determined uniquely by the data of a group homomorphism
\[ \mu\colon G\to \Aut(M), \quad M:=\mathfrak X^*(\Delta). \]
The induced action of $G$ on $T$ is determined by the induced homomorphism
\[ \nu\colon G\to \Aut(M/M_{\mathrm{tors}}). \]
There is also an induced action of $G$ on $\Delta/T$.
A $k$-point $\delta\in \Delta$ is given by a homomorphism $M\to k^\times$, that we also denote by $\delta$, and the corresponding class
$\bar\delta\in \Delta/T$ is given by the restriction of $\delta$ to $M_{\mathrm{tors}}$; so,
\[ \ker(\bar\delta)=\ker(\delta)\cap M_{\mathrm{tors}}. \]
There is an induced homomorphism
\[ \nu_{\bar\delta}\colon G_{\bar\delta}\to \Aut(M/\ker(\bar\delta)), \]
where $G_{\bar\delta}=\Stab(\bar\delta)\subseteq G$ denotes the stabilizer of $\bar\delta\in \Delta/T$.
We also remark that the $G$-action on $\Delta/T$ always fixes the identity; consequently, when $\Delta$ has nontrivial group of connected components, the action of $G$ cannot be transitive on components of $\Delta$.

That $G$ need not act transitively on components, represents a departure from the convention
in \cite[\S2]{KT-vector}.
For instance, then, different orbits of components can have different isomorphism types of generic stabilizer groups.
Nevertheless, we might agree to call the $G$-action on $\Delta$
\emph{generically free} when it satisfies the following
equivalent conditions.

\begin{lemm}
\label{lem.genericallyfree}
There exists a $G$-invariant dense open subscheme of $\Delta$, on which $G$ acts freely, if and only if $G$ acts generically freely on the identity component $T$ of $\Delta$.
\end{lemm}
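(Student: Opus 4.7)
The plan is to translate both implications into statements about the fixed-point loci $\Delta^g$ for $g \in G\setminus\{1\}$, and to show that in each direction the question of generic freeness on $\Delta$ is controlled by the identity component $T$.

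One direction is nearly immediate. Since $G$ acts by automorphisms of $\Delta$, it fixes the identity, hence preserves the identity component $T$. If $U \subseteq \Delta$ is a $G$-invariant dense open subscheme on which $G$ acts freely, then $U \cap T$ is $G$-invariant, dense open in $T$ (because $T$ is an open subscheme of $\Delta$ meeting $U$), and $G$ acts freely there.

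For the converse, I would first record the standard equivalence, for a torus $T$, between generic freeness of the $G$-action and injectivity of the induced representation $G \to \Aut(T)$. The content is: for $g \ne 1$, the fixed-point subscheme $T^g$ is the kernel of the group endomorphism $t \mapsto g(t)t^{-1}$ of $T$, a proper closed subgroup precisely when $g$ acts nontrivially; if $g \ne 1$ acts trivially on $T$, then $g$ lies in every stabilizer and generic freeness fails. Assuming injectivity, I would then analyze $\Delta^g$ component by component. Components of $\Delta$ not preserved by $g$ contribute no fixed points. On each preserved component $\Delta_i$, I would choose a basepoint $\delta_i$ and write $\delta_i^g = \delta_i \cdot s_i$ with $s_i \in T$; the equation $(\delta_i t)^g = \delta_i t$ rewrites as $g(t)t^{-1} = s_i^{-1}$, realizing $\Delta_i^g$ as a fiber of the homomorphism $t \mapsto g(t)t^{-1}$, hence either empty or a coset of the proper closed subgroup $T^g \subsetneq T$.

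Combining, each $\Delta^g$ with $g \ne 1$ is a proper closed subvariety of $\Delta$; finiteness of $G$ makes $\bigcup_{g \ne 1}\Delta^g$ a proper closed $G$-invariant subset, and its open complement is the sought $G$-invariant dense open locus of free action. The main conceptual step is the equivalence between generic freeness on $T$ and injectivity of $G \to \Aut(T)$, which uses the group structure of $T$ in an essential way; once it is in hand, the coset analysis on each component is routine, and extending from $T$ to $\Delta$ amounts to observing that a nontrivial element of $G$ cannot become a translation by a torsor element on a whole component without already acting nontrivially on $T$.
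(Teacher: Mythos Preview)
Your argument is correct. The forward direction matches the paper exactly. For the converse, however, you and the paper take genuinely different routes.

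The paper's trick is to pick $r$ with $(\Delta/T)$ $r$-torsion and use the $r$th-power map $\Delta\to\Delta$, which is $G$-equivariant (since $G$ acts by group automorphisms) and has image $T$; pulling back a free open subset of $T$ along this map gives a $G$-invariant dense open subset of $\Delta$ on which $G$ acts freely. This avoids any analysis of individual fixed loci and makes no explicit use of the equivalence between generic freeness on $T$ and injectivity of $G\to\Aut(T)$.

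Your approach instead establishes that equivalence and then shows directly that each $\Delta^g$, $g\ne 1$, is nowhere dense by a coset computation on each $g$-stable component. This is more hands-on but entirely elementary, and it gives a concrete description of the non-free locus (a union of translates of the proper subgroups $T^g$). The paper's argument is slicker and exploits the group structure of $\Delta$ globally via the power map; yours is perhaps more transparent about \emph{why} nothing goes wrong on the non-identity components. Both are valid, and your write-up would benefit from noting explicitly that $G$ acts on $T$ by group automorphisms (hence $t\mapsto g(t)t^{-1}$ is a homomorphism) and that $k$ being algebraically closed guarantees the basepoints $\delta_i$ exist.
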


\begin{proof}
A dense open subscheme of $\Delta$, on which $G$ acts freely, has nontrivial intersection with $T$ and exhibits the $G$-action on $T$ as generically free.
For the reverse implication, we let $r$ be a positive integer, such that the group of connected components of $\Delta$ is $r$-torsion.
Then the $r$th power endomorphism of $\Delta$ is $G$-equivariant and has image $T$.
The pre-image of a nonempty invariant open subscheme of $T$, on which $G$ acts freely, is an invariant dense open subscheme of $\Delta$, on which $G$ acts freely.
\end{proof}

\subsection{Lattice structure}
\label{sect:latt}
In the following discussion we do not assume that $G$ acts generically freely on $\Delta$.

We call a subgroup $\Gamma\subseteq G$
\emph{distinguished} if
$\Gamma$ is the largest subgroup,
acting trivially on some algebraic subgroup $\Theta\subset \Delta$.
We call an algebraic subgroup
$\Theta\subset \Delta$ \emph{distinguished} if $\Theta$ is maximal, on which $\Gamma$ acts trivially, for some subgroup $\Gamma\subseteq G$.
The operations, associating to
$\Theta$ the distinguished $\Gamma$,
and to $\Gamma$, the distinguished $\Theta$, restrict to inverse order-reversing bijections between distinguished subgroups of $G$ and distinguished algebraic subgroups of $T$.
The set of distinguished subgroups of $G$ has a structure of lattice,
with $\Gamma_1\wedge \Gamma_2=\Gamma_1\cap \Gamma_2$.
To describe $\Gamma_1\vee \Gamma_2$ we let $\Theta_i$ denote the algebraic subgroup of $\Delta$ associated with $\Gamma_i$ for $i=1$, $2$.
Then $\Gamma_1\vee \Gamma_2$ is the subgroup, containing both $\Gamma_1$ and $\Gamma_2$, with associated algebraic subgroup $\Theta_1\cap \Theta_2$.

Let $\Gamma$ be a distinguished subgroup, with associated algebraic subgroup $\Theta\subset \Delta$.
Then $\Gamma$ is the intersection of the generic stabilizer groups of the components of $\Theta$.
We might have $\Gamma$ as generic stabilizer of some component of $\Theta$, indeed this arises when we use the stabilizer of a point of $\Delta$ to define $\Theta$, whereby we see: \emph{every stabilizer group is distinguished}.
As the next example shows, a distinguished subgroup is not necessarily the stabilizer group of any point of $\Delta$.

\begin{exam}
\label{exam.notstabilizer}
Consider $G=C_2\times \fS_3$,
with $G\to \GL_2(\bZ)$ sending the generator of $C_2$, respectively generators of $\fS_3$, to
\[
\begin{pmatrix}
-1&0\\0&-1\end{pmatrix},\quad
\begin{pmatrix}
0&-1\\1&-1\end{pmatrix},\quad
\begin{pmatrix}
0&1\\1&0\end{pmatrix}.
\]
(This is essentially the unique faithful $2$-dimensional representation of $G$ over $\Z$ \cite{voskresenskii2dim}.)
The center $C_2$ is distinguished, associated with $(\mu_2)^2$. However, every point of $(\mu_2)^2$ has stabilizer of order $4$ or $12$.

%Consider the hyperoctahedral group of order $48$, 
%$G\subset \GL_3(\Z)$, with image generated by permutation and diagonal matrices.
%The normal subgroup of order $8$ is distinguished,
%associated with $(\mu_2)^3$.
%However, every point of $(\mu_2)^3$ has stabilizer of order $16$ or $48$.
\end{exam}

We modify the discussion above, by considering \emph{subtori} $T'\subset T$ and their associated distinguished subgroups $\Gamma'\subseteq G$.
The maximal algebraic subgroup $\Theta'\subset \Delta$, on which
$\Gamma'$ acts trivially, contains $T'$ and has the property, that every connected component has the same generic stabilizer.
The above bijection restricts to one, between distinguished subgroups of $\Gamma'\subseteq G$ associated with subtori of $T$ and distinguished algebraic subgroups of $\Delta$ whose components all have the same generic stabilizer; we write $T_{\Gamma'}$ for the identity component of the associated algebraic subgroup of $\Delta$.
There is a lattice structure, with
$\Gamma'_1\wedge \Gamma'_2=\Gamma'_1\cap \Gamma'_2$, but
now $\Gamma'_1\vee \Gamma'_2$ is the subgroup associated with the identity component of $T_{\Gamma'_1}\cap T_{\Gamma'_2}$.

We write
\[ \cL'=\cL'(T) \]
for the lattice of distinguished subgroups of $G$, associated with subtori of $T$.
The lattice $\cL'$ has maximal element $G$, and minimal element
$\ker(\nu)$.
To each $\Gamma'\in \cL'$, there is the subtorus $T_{\Gamma'}\subset T$, associated with a primitive sublattice $N_{\Gamma'}\subset N$; concretely, $N_{\Gamma'}$ is the sublattice of $N$, where $\Gamma'$ acts trivially.
We introduce
\[
\cT_{\cL'}:=\{T_{\Gamma'}\,|\,\Gamma'\in \cL'\},\qquad
\cG_{\cL'}:=\cT_{\cL'}\setminus \{T\}.
\]

\begin{lemm}
\label{lem.subtori}
Every distinguished algebraic subgroup $\Theta\subset \Delta$ has connected component of the identity in $\cT_{\cL'}$.
\end{lemm}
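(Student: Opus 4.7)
The plan is to show that for any distinguished $\Theta \subset \Delta$, the identity component $\Theta^\circ$ arises as $T_{\Gamma'}$ for a specific distinguished subgroup $\Gamma' \in \cL'$, by comparing $\Theta$ against the distinguished algebraic subgroup associated to $\Theta^\circ$ via a sandwich argument.

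First, note that $\Theta^\circ$ is a connected algebraic subgroup of $\Delta$; since $\Delta/T$ is finite, $\Theta^\circ \subseteq T$, so $\Theta^\circ$ is a subtorus of $T$. I would then define $\Gamma'$ to be the largest subgroup of $G$ acting trivially on $\Theta^\circ$. By construction $\Gamma'$ is distinguished and associated to a subtorus of $T$, so $\Gamma' \in \cL'$. Let $\Theta'\subset \Delta$ denote the maximal algebraic subgroup on which $\Gamma'$ acts trivially, so that by definition $T_{\Gamma'} = (\Theta')^\circ$.

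Next, let $\Gamma \subseteq G$ be the distinguished subgroup associated with $\Theta$, i.e.\ the largest subgroup acting trivially on $\Theta$. Since $\Theta^\circ \subseteq \Theta$, the subgroup $\Gamma$ also acts trivially on $\Theta^\circ$, and maximality of $\Gamma'$ for $\Theta^\circ$ gives $\Gamma \subseteq \Gamma'$. From this I would deduce two chains of containment:
\[ \Theta^\circ \subseteq \Theta' \subseteq \Theta. \]
The left inclusion holds because $\Gamma'$ acts trivially on $\Theta^\circ$ and $\Theta'$ is maximal with this property. The right inclusion holds because $\Gamma \subseteq \Gamma'$ acts trivially on $\Theta'$, and $\Theta$ is maximal on which $\Gamma$ acts trivially. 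Passing to identity components squeezes $(\Theta')^\circ$ between $\Theta^\circ$ and itself, yielding $T_{\Gamma'} = (\Theta')^\circ = \Theta^\circ$, as required.

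There is no real obstacle here beyond bookkeeping: the argument is a formal consequence of the order-reversing behavior of the two closure operations ($\Gamma \mapsto \Theta$, $\Theta \mapsto \Gamma$) combined with the observation that passing to the identity component does not disturb the inequalities. The only point that requires care is verifying membership in $\cL'$, which follows immediately once one observes that $\Theta^\circ$ is a subtorus of $T$, so that the distinguished subgroup $\Gamma'$ it determines belongs to $\cL'$ by definition.
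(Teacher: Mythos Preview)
Your proof is correct and follows essentially the same approach as the paper: both define $\Gamma'$ as the (generic) stabilizer of the identity component $T'=\Theta^\circ$, observe $\Gamma\subseteq\Gamma'$, and then sandwich $T'\subseteq T_{\Gamma'}=(\Theta')^\circ\subseteq\Theta^\circ=T'$ to conclude. The paper phrases the final step via a dimension comparison, whereas you pass directly to identity components, but the arguments are the same.
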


\begin{proof}
We let $\Gamma\subseteq G$ denote the distinguished subgroup associated with $\Theta$,
and $T'$, the identity component of $\Theta$.
The generic stabilizer $\Gamma'$ of $T'$ contains $\Gamma$, and we have $\Gamma'\in \cL'$.
We have $T'$ contained in $T_{\Gamma'}$, the identity component of the associated algebraic subgroup $\Theta'$.
Since $T_{\Gamma'}\subset \Theta'\subset \Theta$, we have $\dim T_{\Gamma'}\le \dim T'$.
It follows that $T'=T_{\Gamma'}$.
\end{proof}

Notice, by Lemma \ref{lem.subtori}, the
locus in $\Delta$ with nontrivial stabilizers is contained in a finite union of translates of tori $T_{\Gamma'}$ for $\Gamma'\in \cL'$, where the translates are by torsion elements of $\Delta$.
(For any stabilizer group, by Lemma \ref{lem.subtori} the associated distinguished algebraic subgroup will be such a finite union.)
As well, $T_{\Gamma'}$ is invariant under the action of $g\in G$ if and only if $g\in N_G(\Gamma')$.
(The generic stabilizer gets conjugated by $g$.)

\subsection{Equivariant compactifications of diagonalizable groups}
\label{sect:ecdg}
Let $\Delta$ be a diagonalizable algebraic group over $k$, with an action of a finite group $G$.
Since the character groups $\mathfrak X^*(T)$ and $\mathfrak X^*(\Delta)$ have a common dual $N$,
a smooth projective fan $\Sigma$ in $N_\R$ determines,
besides the equivariant compactification
\[ T\subset X=X_\Sigma, \]
also an equivariant compactification
%\[ \Delta\subset X^+=X^+_\Sigma. \]
\[ \Delta\subset \mathbb X=\mathbb X_\Sigma. \]
The smooth projective scheme $\mathbb X$ has components indexed by
$\Delta/T$:
\[
\mathbb X=\bigsqcup_{\bar{\delta} \in \Delta/T} \,  X\bar\delta.
\]

Let $r$ be a positive integer, such that $\Delta/T$ is $r$-torsion.
A primitive sublattice $N'\subseteq N$ now gives rise to:
\begin{itemize}
\item The subtorus $T'\subset T$, its $r$-torsion translate
\[ T'_{[r]}:=T'\Delta[r]\subset \Delta, \]
and the induced surjective homomorphism
\[
\vartheta_{[r]}\colon T'_{[r]}/T'\to \Delta/T.
\]
\item The corresponding equivariant compactification
\[ X'_{[r]}:=X'\Delta[r]\subset \mathbb X_\Sigma, \]
with components indexed by $T'_{[r]}/T'$.
\item The quotient diagonalizable algebraic group
\[ \Delta/T', \]
which as in
Lemma \ref{lem.closure} is the target of a smooth $\Delta$-equivariant morphism
from a
$\Delta$-invariant neighborhood of $X'_{[r]}$ in $\mathbb X_\Sigma$,
with $X'_{[r]}$ as fiber over $(\Delta/T')[r]$.
\item In case $N'=N_\sigma$,
notation $\Delta^\sigma$ for the quotient diagonalizable algebraic group, with
$\Delta^\sigma$ identified with a corresponding $\Delta$-orbit in $\mathbb X_\Sigma$ as in \S \ref{sect:qo}, and closure of $\Delta^\sigma$ in $\mathbb X_\Sigma$ identified with $\mathbb X_{\Sigma^\sigma}$.
\end{itemize}

\subsection{Constructing the model}
\label{sect:cm}
Let $\cL'=\cL'(T)$ be the lattice of distinguished subgroups of $G$, associated with subtori of $T$, as above.
As we have seen in Section \ref{sec.generalities},
there exists a smooth projective fan $\Sigma$ that
meets the following criteria:
\begin{itemize}
\item Property $(\mathrm{E})$ holds for all subtori in $\cT_{\cL'}$, with respect to $\Sigma$.
\item $\Sigma$ is $G$-invariant.
\item No pair of rays of $\Sigma$, in a single $G$-orbit, spans a cone of $\Sigma$.
\end{itemize}

For $\sigma\in \Sigma$,
the stabilizer $\Stab(\sigma)$ acts on
the toric variety
$D_\sigma\cong X_{\Sigma^\sigma}$, an equivariant compactification of the quotient torus $T^\sigma$.
%We claim that 

\begin{lemm}
\label{lemm:preim}
We have $\Stab(\sigma)\in \cL'$.
The lattice $\cL'(T^\sigma)$, associated with the $\Stab(\sigma)$-action on $T^\sigma$, is equal to the sublattice of $\cL'$, of elements bounded above by $\Stab(\sigma)$.
For $\Gamma'\in \cL'(T^\sigma)$, the
pre-image of $(T^\sigma)_{\Gamma'}$ under the projection morphism
$$
\mathrm{pr}^\sigma\colon T\to T^\sigma
$$
is equal to $T_{\Gamma'}$.
\end{lemm}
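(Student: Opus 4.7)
The plan is to prove the three assertions in order, all supported by a small auxiliary lemma. I first record the following: if a subgroup $H\subseteq G$ acts on a subtorus $S\subseteq T$, trivially on a subtorus $S_1\subseteq S$ and trivially on the quotient $S/S_1$, then $H$ acts trivially on $S$. Passing to character lattices, each $h\in H$ acts on the free abelian group $\mathfrak X^*(S)$ by a unipotent matrix $I+N$ with $N^2=0$; finite order of $h$ then forces $nN=0$ for some positive integer $n$, hence $N=0$. This is the only non-bookkeeping ingredient.

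For the first assertion, I aim to show that $\Stab(\sigma)$ is precisely the maximal subgroup of $G$ acting trivially on the subtorus $T_\sigma\subseteq T$, whence $\Stab(\sigma)\in\cL'$. The crucial input is the third bullet of \S\ref{sect:cm}: no two rays of any cone of $\Sigma$ lie in the same $G$-orbit. Thus any $g\in\Stab(\sigma)$, which permutes the rays of $\sigma$, must in fact fix each ray, hence act trivially on $N_\sigma$ and on $T_\sigma$. Conversely, an element acting trivially on $N_\sigma$ fixes the primitive generators of $\sigma$, hence $\sigma$ itself.

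For the second assertion, the plan is to construct mutually inverse maps between $\cL'(T^\sigma)$ and the sublattice $\{\Gamma'\in\cL'\,|\,\Gamma'\subseteq \Stab(\sigma)\}\subseteq\cL'$. Given $\Gamma'\in\cL'(T^\sigma)$ with associated subtorus $T''\subseteq T^\sigma$, I set $T':=(\mathrm{pr}^\sigma)^{-1}(T'')$, a subtorus of $T$ containing $T_\sigma$; the auxiliary lemma shows $\Gamma'$ acts trivially on $T'$, and the first assertion confines any larger subgroup of $G$ acting trivially on $T'$ to $\Stab(\sigma)$, whence maximality of $\Gamma'$ inside $\Stab(\sigma)$ forces it to coincide with $\Gamma'$. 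In the reverse direction, $\Gamma'\in\cL'$ with $\Gamma'\subseteq\Stab(\sigma)$ forces $T_{\Gamma'}\supseteq T_\sigma$, and the subtorus $T_{\Gamma'}/T_\sigma\subseteq T^\sigma$ witnesses $\Gamma'\in\cL'(T^\sigma)$ by a symmetric argument using the auxiliary lemma.

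The third assertion is then immediate: the same comparison shows $(T^\sigma)_{\Gamma'}=T_{\Gamma'}/T_\sigma$, whose preimage under $\mathrm{pr}^\sigma$ is $T_{\Gamma'}$. The main obstacle, if it deserves that name, is the auxiliary lemma; once it is in hand, everything else reduces to tracking definitions via the combinatorial condition on orbits of rays.
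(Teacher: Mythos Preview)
Your proof is correct and follows essentially the same strategy as the paper's, with one notable improvement and one difference of emphasis.

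The improvement: your auxiliary lemma (finite-order unipotent on a free abelian group is the identity) makes explicit a step that the paper simply asserts with the word ``Consequently''. When the paper writes that $\Gamma'$ acts trivially on $(\mathrm{pr}^\sigma)^{-1}((T^\sigma)_{\Gamma'})$ because it acts trivially on $N_\sigma$ and on the quotient, this is exactly your auxiliary lemma, and you are right to isolate it. You also treat the first assertion $\Stab(\sigma)\in\cL'$ explicitly, by exhibiting $T_\sigma$ as a witnessing subtorus; the paper leaves this implicit.

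The difference: where you argue directly with cocharacter lattices to pin down the generic stabilizer of the preimage, the paper instead invokes Proposition~\ref{prop:trans}. Concretely, once one knows the generic stabilizer $\Gamma''$ of the preimage lies in $\Stab(\sigma)$ and satisfies $T_{\Gamma''}\supseteq (\mathrm{pr}^\sigma)^{-1}((T^\sigma)_{\Gamma'})$, the paper uses Proposition~\ref{prop:trans} (applied to $T_{\Gamma''}$, which has property $(\mathrm{E})$ by the standing hypothesis on $\Sigma$) to identify the image of $T_{\Gamma''}$ in $T^\sigma$ and thereby force $\Gamma''=\Gamma'$. Your route avoids this appeal and stays entirely inside the torus, which is arguably cleaner; the paper's route ties the statement back to the compactification geometry already developed. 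Either way the content is the same.
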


\begin{proof}
%To prove the claim, 
By the third criterion on $\Sigma$, the action of $\Stab(\sigma)$ on $N_\sigma$ is trivial.
Consequently, if $\Gamma'\in \cL'(T^\sigma)$, so $\Gamma'$ is the generic stabilizer of $(T^\sigma)_{\Gamma'}$,
then $\Gamma'$ acts trivially on
$(\mathrm{pr}^\sigma)^{-1}((T^\sigma)_{\Gamma'})$.
Using Proposition \ref{prop:trans}, we see that $\Gamma'$ is the generic stabilizer of $(\mathrm{pr}^\sigma)^{-1}((T^\sigma)_{\Gamma'})$, and
$T_{\Gamma'}=(\mathrm{pr}^\sigma)^{-1}((T^\sigma)_{\Gamma'})$.
In the other direction, if $\Gamma'\in \cL'$, $\Gamma'\subseteq \Stab(\sigma)$, then
$T_{\Gamma'}$ is the pre-image under $\mathrm{pr}^\sigma$ of a subtorus of $T^\sigma$, whose generic stabilizer is $\Gamma'$.
\end{proof}

Each $\Delta$-orbit of $\mathbb X_\Sigma$ gives an instance of \S \ref{sect:arr-diag}, with $\Stab(\sigma)$ acting on $\Delta^\sigma$,
and the locus with nontrivial stabilizer contained in some translates of the subtori in $\cT_{\cL'(T^\sigma)}$.
As noted above, the following are valid:
\begin{itemize}
\item For a suitable positive integer $r$, the translates of the subtori in $\cT_{\cL'(T^\sigma)}$ are by $r$-torsion elements of $\Delta^\sigma$.
\item The subtori in $\cT_{\cL'(T^\sigma)}$ have pre-images in $T$, belonging to $\cT_{\cL'}$.
\item Property $(\mathrm{E})$ holds for the subtori in $\cT_{\cL'(T^\sigma)}$, with respect to $\Sigma^\sigma$.
\end{itemize}
Since $\Sigma$ has finitely many cones, a single positive integer $r$ may be chosen, so that translation
is by $r$-torsion elements of $\Delta^\sigma$, in the first item above, for all $\sigma\in \Sigma$.
We suppose, as well, that $\Delta/T$ is $r$-torsion.

Consequently, the intersection of any pair of subtori in $\cT_{\cL'}$ is a diagonalizable algebraic group, whose group of connected components is $r$-torsion.
Indeed, if $\Gamma'$, $\Gamma''\in \cL'$, with respective associated subtori $T':=T_{\Gamma'}$ and $T'':=T_{\Gamma''}$, then
$T''':=T_{\Gamma'\vee\Gamma''}$ is the identity component of $T'\cap T''$.
Any non-identity component of $T'\cap T''$ has generic stabilizer contained in $\Gamma'\vee\Gamma''$ and
associated distinguished algebraic subgroup $\Theta\subset T$, that satisfies
\[ T'''\subset \Theta\subset T'''\Delta[r]. \]
So $(T'\cap T'')/T'''$ is $r$-torsion.

Inside $\mathbb X=\mathbb X_\Sigma$ there is the union of $r$-torsion translates of the closures of the subtori in $\cG_{\cL'}$.
The complement, the ``open part''
\[
\mathbb X^{\circ}\subset \mathbb X,
\]
has the stabilizer distinguishing property, for the $G$-action with respect to the toric boundary.

The projective model
\[ \mathbb X_{\Sigma,\cL',[r]} \]
is obtained as in \cite{DGwonderful}, by applying the De Concini-Procesi iterated blowup procedure, as developed in \cite{li-wonderful}, to the $r$-torsion translates of the closures in $X_{\Sigma}$ of the subtori in $\cG_{\cL'}$.
So, $\mathbb X_{\Sigma,\cL',[r]}$ is the closure of $\mathbb X^{\circ}$, in the product of $\mathbb X$ with all blow-ups
\[ B\ell_{X'_{[r]}} \mathbb X \]
for $T'\in \cG_{\cL'}$, where
$X'_{[r]}\subset \mathbb X$ denotes the corresponding $r$-torsion translate compactification (\S \ref{sect:ecdg}).
There is a projection morphism
\begin{equation}
\label{eqn.projection}
\pi\colon \mathbb X_{\Sigma,\cL',[r]}\to \mathbb X,
\end{equation}
which is an isomorphism over $\mathbb X^{\circ}$.
As is the case for $\mathbb X$, the projective model has connected components indexed by $\Delta/T$:
\[ \mathbb X_{\Sigma,\cL',[r]}=\bigsqcup_{\bar\delta\in \Delta/T} X_{\Sigma,\cL',[r]}\bar\delta. \]
In particular, when $\Delta$ is a torus already, we have
$\mathbb X_{\Sigma,\cL',[r]}=X_{\Sigma,\cL',[r]}$.
%i.e., we are free to omit the superscript $+$ from the notation.

The complement of $\mathbb X^{\circ}$ in $\mathbb X_{\Sigma,\cL',[r]}$ is a normal crossing divisor
\[
\mathbb D=\bigcup_{\ker(\nu)\ne\Gamma'\in \cL'}\mathbb D_{\Gamma'}.
\]
For $\Lambda\subset \cL'\setminus\{\ker(\nu)\}$, there is the stratum
\[
\mathbb D_{\Lambda}:=\bigcap_{\Gamma'\in \Lambda}\mathbb D_{\Gamma'},
\]
with $\mathbb D_\emptyset:=\mathbb X_{\Sigma,\cL',[r]}$ by convention.
We have $\mathbb D_\Lambda\ne\emptyset$ if and only if
$\Lambda$ is a chain in $\cL'$, i.e., letting $t$ denote the cardinality of $\Lambda$ we have
\begin{equation}
\label{eqn.chain}
\Lambda=\{\Gamma^1,\dots,\Gamma^t\}\subset \cL'\setminus\{\ker(\nu)\},\qquad \Gamma^1\supset\dots\supset \Gamma^t.
\end{equation}
Suppose $\Lambda$ is a nonempty chain.
Then with
\begin{equation}
\label{eqn.chainfirstT}
T':=T_{\Gamma^1},
\end{equation}
the connected components of $\mathbb D_\Lambda$ are indexed by $T'_{[r]}/T'$:
\begin{equation}
\label{eqn.DLambdacomponents}
\mathbb D_\Lambda=\bigsqcup_{\bar\tau\in T'_{[r]}/T'} D_\Lambda \bar\tau.
\end{equation}

\begin{exam}
\label{exa.Z2onBlpP2}
Let $G=\Z/2\Z$ act on $T=\G_m^2$, swapping the two factors.
With $\Sigma$ the complete fan in $N_\R$, for $N=\Z^2=\Z\langle e_1,e_2\rangle$, with rays generated by $e_1$, $e_1+e_2$, $e_2$, $-e_1-e_2$, so $X_\Sigma$ is the blow-up of $\bP^2$ at a point, the origin in $\A^2\subset\bP^2$.
We have $\cL'=\{\mathrm{triv},G\}$, with respective subtori $\G_m^2$ and $\Delta_{\G_m}$ (the diagonal).
The stabilizer locus in $T$ is precisely $\Delta_{\G_m}$.
However, we need to take $r$ divisible by $2$, since $\sigma=\R_{\ge 0}\cdot (e_1+e_2)$
(and also $\sigma=\R_{\ge 0}\cdot (-e_1-e_2)$)
leads to $\Stab(\sigma)=G$ acting nontrivially on a one-dimensional torus;
this fixes the $2$-torsion subgroup.
We only blow up divisors, so
\[ X_{\Sigma,\cL',[2]}\cong X_\Sigma, \]
and $X^\circ$ is the complement of the proper transforms of two lines in $\bP^2$, intersecting at the origin in $\A^2\subset \bP^2$, with slopes $\pm 1$.
\end{exam}
 
 \subsection{Properties of the model}
 \label{sect:propmod}

\begin{prop}
\label{prop.standardmodel}
Suppose that $G$ acts generically freely on $\Delta$.
Then,
for $\Sigma$ and $r$ as above, the projective variety $$
\mathbb X_{\Sigma,\cL',[r]}
$$ 
is in standard form
with respect to the union of the strict transform of the toric boundary and the exceptional divisors of the De Concini-Procesi iterated blowup procedure.
\end{prop}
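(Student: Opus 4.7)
The plan is to verify the defining conditions of standard form for the pair $(\mathbb X_{\Sigma,\cL',[r]}, D)$, where $D$ denotes the union of the strict transform of the toric boundary of $\mathbb X_\Sigma$ together with the exceptional divisors of the iterated blowup. The conditions are: $D$ is a $G$-invariant simple normal crossing divisor; the $G$-action on $\mathbb X_{\Sigma,\cL',[r]}$ is generically free; and at every point $x$ the stabilizer $\Stab_G(x)$ is abelian and fixes, set-wise, each component of $D$ through $x$. Generic freeness is immediate from the hypothesis and Lemma \ref{lem.genericallyfree}, since $\mathbb X^\circ\subset \mathbb X_{\Sigma,\cL',[r]}$ is a $G$-invariant dense open on which $G$ acts freely. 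The $G$-invariance of $D$ is built into the construction of \S \ref{sect:cm}: $\Sigma$ is $G$-invariant by choice, $\cL'$ is $G$-stable by definition, and $\Delta[r]$ is characteristic in $\Delta$, so the arrangement of $r$-torsion translates of closures of subtori in $\cG_{\cL'}$ is $G$-invariant and the De Concini-Procesi iterated blowup is $G$-equivariant.

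The SNC property of $D$ follows by combining the wonderful-model theorem of \cite{DGwonderful} with Proposition \ref{prop:trans}: the latter ensures that each $X'_{[r]}$ meets the toric boundary transversely in $\mathbb X_\Sigma$, and the iterated blowup of a $G$-invariant arrangement transversal to an SNC divisor yields SNC exceptional divisors together with the strict transform of the given divisor. For the stabilizer condition, I would stratify $\mathbb X_{\Sigma,\cL',[r]}$ by pairs $(\sigma,\Lambda)$ with $\sigma\in \Sigma$ indexing a toric stratum and $\Lambda=\{\Gamma^1\supset\dots\supset\Gamma^t\}$ a chain in $\cL'\setminus\{\ker\nu\}$ with $\sigma\subset N_{\Gamma^1,\R}$, so that the toric and chain data are compatible. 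The standard étale-local picture of a De Concini-Procesi wonderful model near a chain stratum, combined with the smooth morphism of Lemma \ref{lem.closure} and the Cartesian description of Proposition \ref{prop:trans}, presents a neighborhood of a point $x$ in such a stratum as $V\times\A^{\dim\sigma}\times\A^t$, the normal coordinates cutting out, respectively, the strict transforms of the toric divisors $D_{\rho_j}$ for the rays $\rho_j$ of $\sigma$ and the exceptional divisors $\mathbb D_{\Gamma^i}$ of the chain.

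By the third criterion on $\Sigma$, no two rays of $\sigma$ lie in the same $G$-orbit, so $\Stab_G(x)$ fixes each toric coordinate axis individually; by Lemma \ref{lemm:preim}, the generic stabilizer of the stratum is $\Gamma^1$, which fixes each exceptional divisor of the chain by construction. Hence $\Stab_G(x)$ preserves every component of $D$ through $x$ set-wise and acts on the normal coordinates by characters; it therefore embeds into an $(\dim\sigma+t)$-dimensional torus and is abelian. The hard part will be the étale-local description of the wonderful-model blowup along a chain stratum, together with the identification of the generic stabilizer of such a stratum as exactly $\Gamma^1$; the necessary ingredients—property $(\mathrm{E})$ for the subtori in $\cT_{\cL'}$, Proposition \ref{prop:trans}, Lemma \ref{lem.closure}, Lemma \ref{lemm:preim}, and the ray-orbit criterion on $\Sigma$—are all in place, but weaving them into a single local model requires care.
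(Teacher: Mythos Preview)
Your argument diverges from the paper's at the very first step: you are not using the paper's definition of \emph{standard form}. Immediately after the proposition the paper recalls the Reichstein--Youssin definition: the divisor has smooth $G$-orbits of components, and $G$ acts \emph{freely on the complement}. Your three conditions (SNC, generically free, abelian stabilizers fixing components through each point) are not the same. In particular, ``abelian stabilizer at every point'' is a well-known \emph{consequence} of standard form, not part of the definition, and ``generically free'' is strictly weaker than ``free on the complement of $D$''. A point off $D$ with nontrivial (abelian) stabilizer would satisfy all three of your conditions yet violate the paper's definition. So as written, your proof does not establish the proposition.

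The paper's proof is correspondingly much shorter. The SNC assertion is handled essentially as you do (DCP wonderful-model theory, plus transversality of centers with the toric boundary via Lemma~\ref{lem.closure} and Proposition~\ref{prop:trans}). The remaining condition---freeness on the complement of $D$---is dispatched in one line by invoking Lemma~\ref{lem.subtori}: that lemma says the locus in $\Delta$ with nontrivial stabilizer lies in torsion translates of the tori $T_{\Gamma'}$, $\Gamma'\in\cL'$, and the choice of $r$ guarantees these are exactly the $r$-torsion translates that get blown up. Since the complement of $D$ in $\mathbb X_{\Sigma,\cL',[r]}$ is $\Delta\cap\mathbb X^\circ$, freeness there is immediate. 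There is no need for an \'etale-local model, for identifying generic stabilizers of chain strata, or for showing stabilizers are abelian.

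Ironically, the one sentence of yours that would have closed the gap---``$\mathbb X^\circ$ is a $G$-invariant dense open on which $G$ acts freely''---is correct, but you cite it only for generic freeness and do not justify it. Its justification is precisely Lemma~\ref{lem.subtori} together with the choice of $r$ (one must also check the toric strata $\Delta^\sigma$, which is where Lemma~\ref{lemm:preim} and Proposition~\ref{prop:trans} enter). Had you cited Lemma~\ref{lem.subtori} here and observed that the complement of $D$ sits inside $\mathbb X^\circ$, you would have had the paper's proof, and the entire third paragraph could be deleted.
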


To be in \emph{standard form} with respect to a simple normal crossing divisor, means that the divisor has smooth $G$-orbits of components and that $G$ acts freely on the complement  \cite{reichsteinyoussinessential}.

\begin{proof}
The exceptional divisors of the
De Concini-Procesi blowup form a simple normal crossing divisor.
The centers of blowup $X'_{[r]}$ have transverse intersection with the toric boundary by Lemma \ref{lem.closure}.
By Proposition \ref{prop:trans}, the exceptional divisors, together with the proper transform of the toric boundary, form a simple normal crossing divisor.
We conclude by Lemma \ref{lem.subtori}.
\end{proof}

By analogy with the treatment in \cite{FK3}, we describe a point $p\in \mathbb X_{\Sigma,\cL',[r]}$
as a pair
\[ p=(x,V_1\subset W_1\subset\dots\subset V_t\subset W_t) \]
consisting of a point $x\in \mathbb X$, say in the $\Delta$-orbit identified with $\Delta^\sigma$ (\S \ref{sect:ecdg}), and
a flag of subspaces of $N_k:=N\otimes k$ such that
\begin{itemize}
\item $V_i=N_{\Gamma^i}\otimes k$ for some $\Gamma^i\in \cL'$, for all $i$.
\item We have $\dim(W_i)=\dim(V_i)+1$ for all $i$.
\item The maximal $\Gamma'\in \cL'(T^\sigma)$ with $x\in (T^\sigma)_{\Gamma'}\Delta^\sigma[r]$ is $\Gamma^1$, when $t\ge 1$, otherwise is $\ker(\nu)$.
\item The maximal $\Gamma'\in \cL'$ with $W_i\subset N_{\Gamma'}\otimes k$, is $\Gamma^{i+1}$, for $i<t$.
\item If $t\ge 1$, the maximal $\Gamma'\in \cL'$ with $W_t\subset N_{\Gamma'}\otimes k$, is $\ker(\nu)$.
\end{itemize}
The point $p$ lies in the stratum $\mathbb D_\Lambda$ indexed by the chain
\[
\Gamma^1\supset\dots\supset \Gamma^t,
\]
and not in any deeper stratum.
The stabilizer of $p$ is
\[ \{g\in G\,|\,x\cdot g=x,\text{ and }V_i\cdot g=V_i \text{ and }W_i\cdot g=W_i\text{ for all }i\}. \]

We take $T'$ as in
\eqref{eqn.chainfirstT}, with
corresponding sublattice $N':=N_{\Gamma^1}$.
So, the tangent space to $T/T'$ at the identity is naturally identified with $(N/N')_k$.
The normalizer $N_G(\Gamma^1)$ acts on the quotient $\Delta/T'$, and the smooth morphism from a $\Delta$-invariant neighborhood of $X'_{[r]}$ to $\Delta/T'$ with fiber $X'_{[r]}$ over $(\Delta/T')[r]$, mentioned in \S \ref{sect:ecdg}, is $N_G(\Gamma^1)$-equivariant.
Denoting such a $\Delta$-invariant neighborhood by $Q$, we have the composite
\begin{equation}
\label{eqn.compositefromV}
Q\to \Delta/T'\stackrel{r\cdot}\to \Delta/T',
\end{equation}
with image $T/T'$, and pre-image $X'_{[r]}$ of the identity element.

\begin{lemm}
\label{lem.divisors}
Let the notation be as above.
\begin{itemize}
\item[(i)]
The projection morphism $\pi$, of introduced in \eqref{eqn.projection}, factors through $B\ell_{X'_{[r]}} \mathbb X$.
\item[(ii)]
There exist an $N_G(\Gamma^1)$-invariant neighborhood $W\subset T/T'$ of the identity and an \'etale $N_G(\Gamma^1)$-equivariant morphism from $W$ to the vector space $(N/N')_k$, with fiber over $0$ consisting just of the identity element of $T/T'$, where the corresponding map of tangent spaces gives the natural identification with $(N/N')_k$.
\item[(iii)]
If we let $U$ denote the pre-image of $W$ under the composite morphism \eqref{eqn.compositefromV}, then by following the composite \eqref{eqn.compositefromV} with the morphism of $\mathrm{(ii)}$, we get a smooth $N_G(\Gamma^1)$-equivariant morphism
\[ U\to (N/N')_k, \]
with pre-image of $0$ equal to $X'_{[r]}$.
\item[(iv)] For the induced equivariant morphism $B\ell_{X'_{[r]}}U\to \bP((N/N')_k)$, the class of the exceptional divisor in $\Pic^{N_G(\Gamma^1)}(B\ell_{X'_{[r]}}U)$ is the pullback of $\cO_{\bP((N/N')_k)}(-1)$.
The pullback of $\cO_{\bP((N/N')_k)}(-1)$ to $\pi^{-1}(U)$ is the class of the divisor
\[
\bigcup_{\substack{\Gamma'\in \cL'\\ \Gamma^1\subseteq \Gamma'}} \mathbb D_{\Gamma'},
\]
with all components of multiplicity $1$.
\end{itemize}
\end{lemm}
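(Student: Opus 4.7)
The plan is to treat the four items in order, leaning on the construction of $\mathbb X_{\Sigma,\cL',[r]}$ as a closure inside a product of blowups and on equivariant linearization near the identity of the quotient torus.

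For (i), the variety $\mathbb X_{\Sigma,\cL',[r]}$ was defined as the closure of $\mathbb X^\circ$ in the product of $\mathbb X$ with the blowups $B\ell_{X''_{[r]}}\mathbb X$ for $T''\in \cG_{\cL'}$. Since $T'=T_{\Gamma^1}\in \cG_{\cL'}$ (as $\Gamma^1\in \cL'\setminus\{\ker(\nu)\}$), the factor $B\ell_{X'_{[r]}}\mathbb X$ is among these, and restriction to this factor gives the required factorization of $\pi$. For (ii), I use the standard equivariant linearization of a smooth variety at a fixed point of a finite group action in characteristic zero: the identity is fixed by $N_G(\Gamma^1)$ on $T/T'$ (since $N_G(\Gamma^1)$ acts by group automorphisms), and its tangent space is canonically $(N/N')_k$ with the natural action. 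Averaging a local splitting of the cotangent filtration over the finite group, then choosing local parameters, produces an $N_G(\Gamma^1)$-equivariant étale neighborhood identifying $W\subset T/T'$ with a neighborhood of $0\in (N/N')_k$ and inducing the identity on tangent spaces.

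For (iii), I combine the smooth $\Delta$-equivariant morphism from a $\Delta$-invariant neighborhood $Q$ of $X'_{[r]}$ onto $\Delta/T'$ (from \S\ref{sect:ecdg}, with fiber $X'_{[r]}$ over $(\Delta/T')[r]$) with the $r$-th power map. Since $\Delta/T$ is $r$-torsion, so is the group of components of $\Delta/T'$, so the $r$-th power map lands in $T/T'$ and its fiber over the identity is exactly $(\Delta/T')[r]$; thus the fiber of the composite $Q\to T/T'$ over the identity is $X'_{[r]}$. Composing with the étale map from (ii) yields the smooth $N_G(\Gamma^1)$-equivariant morphism $U\to (N/N')_k$ with fiber $X'_{[r]}$ over $0$, as claimed; $N_G(\Gamma^1)$-equivariance propagates at each step.

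For (iv), the first assertion is a standard computation on a blow-up at a smooth center: the smooth morphism $U\to (N/N')_k$ of (iii) makes $B\ell_{X'_{[r]}}U$ the pullback of $B\ell_0(N/N')_k\to \bP((N/N')_k)$, under which the exceptional divisor is the pullback of the tautological $\cO(-1)$. For the second assertion—the identification of the pullback to $\pi^{-1}(U)$ with $\bigcup_{\Gamma^1\subseteq \Gamma'}\mathbb D_{\Gamma'}$, each with multiplicity $1$—the key input is the iterated De Concini–Procesi/Li wonderful model structure: $\pi^{-1}(U)$ is obtained from $U$ by iterated transverse blowups of (strict transforms of) the $X''_{[r]}$ for $T''\in \cG_{\cL'}$, and the divisors living over $X'_{[r]}$ are precisely those indexed by $T''\subseteq T'$, equivalently $\Gamma''\supseteq \Gamma^1$. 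Property $(\mathrm{E})$ together with Proposition~\ref{prop:trans} ensures transversality at each blowup step, so no divisor acquires higher multiplicity in the total transform; comparing with \eqref{eqn.chain} and the indexing of components yields the stated formula.

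The main obstacle I expect is the bookkeeping in (iv): verifying that the iterated blowup order respects inclusions among the $X''_{[r]}$ so that $\mathbb D_{\Gamma'}$ for $\Gamma^1\subseteq \Gamma'$ are exactly the divisors mapping into the exceptional divisor of $B\ell_{X'_{[r]}}\mathbb X$, and that transversality forces each to appear with multiplicity $1$. Everything else reduces to equivariant linearization at a fixed point and the universal property of blowing up a smooth subvariety.
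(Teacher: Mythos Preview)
Your proposal is correct and follows essentially the same route as the paper: projection to a factor for (i), an equivariant splitting of $\mathfrak m\to\mathfrak m/\mathfrak m^2$ at the identity of $T/T'$ for (ii), composition for (iii), and the Li wonderful-model structure for (iv).

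One sharpening for the second half of (iv): the paper makes the bookkeeping you flag explicit by invoking Li's result to \emph{reorder} the blowups so that $X'_{[r]}$ is blown up first, then the (proper transforms of the) $\pi_1$-preimages of the $X''_{[r]}\subsetneq X'_{[r]}$ in weakly increasing dimension, and only afterwards the remaining centers. The multiplicity-$1$ statement then comes not from Property~$(\mathrm{E})$/Proposition~\ref{prop:trans} (which concern transversality with the toric boundary) but from the elementary fact that blowing up a smooth center contained in a smooth divisor $E$ gives total transform $=$ (proper transform of $E$) $+$ (new exceptional divisor), each with multiplicity~$1$; the final round of blowups has centers not contained in these divisors and hence leaves the class unchanged. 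Your outline is right, but you should cite Li's reordering rather than Property~$(\mathrm{E})$ for this step.
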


\begin{proof}
With $\mathbb X_{\Sigma,\cL',[r]}$ as the closure of $ \mathbb X^{\circ}$ in a product of blow-ups, projection to $B\ell_{X'_{[r]}} \mathbb X$ yields the factorization in (i).

We get (ii) from an equivariant version of a construction of Moci \cite{moci}.
The coordinate ring $k[(N'^\perp\cap M)/M_{\mathrm{tors}}]$ of $T/T'$ has a maximal ideal $\mathfrak{m}$, corresponding to the identity element of $T/T'$.
A splitting of the surjective $N_G(\Gamma^1)$-equivariant homomorphism
\[ \mathfrak{m}\to \mathfrak{m}/\mathfrak{m}^2 \]
determines $N_G(\Gamma^1)$-equivariant $T/T'\to (N/N')_k$, sending the identity to $0$.
The corresponding map of tangent spaces gives the natural identification of the tangent space to $T/T'$ at the identity with $(N/N')_k$.
Then for suitable $W$ we have
(ii); an immediate consequence is (iii).

Since blowing up commutes with smooth base change, we get the first assertion in (iv) from the standard description of $B\ell_{\{0\}}(N/N')_k$ as the closure of the complement of $0$ in
$(N/N')_k\times \bP((N/N')_k)$.
For the remaining assertion, we use
the treatment of iterated blow-ups in \cite{li-wonderful}, which lets us express $\pi$ as an iterated blow-up of $\mathbb X$ in the following manner.
The first step is the blow-up
\[ \pi_1\colon B\ell_{X'_{[r]}}\mathbb X\to \mathbb X. \]
In subsequent steps we
blow up the (proper transforms of the) pre-images under $\pi_1$ of $r$-torsion translate compactifications $X''_{[r]}\subsetneq X'_{[r]}$ in any order of weakly increasing dimensions.
Finally we blow up the proper transforms of the remaining $r$-torsion translate compactifications, in any order of weakly increasing dimensions.
Examination of the behavior of the exceptional divisor of $\pi_1$ under these blow-ups gives what we need.
\end{proof}

\section{Equivariant Burnside group}
\label{sect:ebg}
In this section we recall the equivariant Burnside group, introduced in
\cite{BnG}, and state the formula, that we will use for our computation in the equivariant Burnside group.

The \emph{equivariant Burnside group}
\[ \Burn_n(G)=\Burn_{n,k}(G) \]
is an abelian group, generated by
symbols
\[ (H,Y\actsfromleft K,\beta), \]
where
\begin{itemize}
\item $H\subseteq G$ is an abelian subgroup,
\item $K$ is a field, finitely generated over $k$, with faithful action over $k$ of a subgroup $Y\subseteq Z:=Z_G(H)/H$, where $Z_G(H)$ denotes the centralizer of $H$ in $G$,
\item $\beta$ is a sequence of length $r:=n-\mathrm{trdeg}_{K/k}$ of
nontrivial characters of $H$, that generates $H^\vee$.
\end{itemize}
The symbols are subject to relations, labeled
$$
{\bf{(O)}}, {\bf{(C)}}, 
{\bf{(B1)}}, \text{ and } {\bf{(B2)}}
$$
(which stand for ordering, conjugation, and blowup relations), 
e.g., the equivalence of symbols that differ by a re-ordering of the sequence of characters.
Symbols are also permitted in which $K$ is a Galois algebra for some $Y\subseteq Z$ over a field that is finitely generated over $k$; we identify
$(H,Y\actsfromleft K,\beta)$ with $(H,Z\actsfromleft \Ind_Y^Z(K),\beta)$.
See \cite{KT-vector} for a complete description of relations.

In a symbol, $\beta$ is determined uniquely, up to order, by the similarity type of a
faithful
$(n-\mathrm{trdeg}_{K/k})$-dimensional representation of $H$ over $k$, or any field containing $k$.
Furthermore, we declare a symbol to be trivial
in case the trivial character occurs, i,e., if the given representation has nontrivial space of invariants.

We record a frequently used consequence of defining relations \cite[Prop. 4.7]{BnG}:
If $\beta=(b_1,\ldots, b_r)$ is such that for some 
$$
I\subseteq [1,\ldots, r], \quad |I|\ge 2,
$$
one has
$$
\sum_{i\in I} b_i = 0,$$
then 
$$
(H,Y\actsfromleft K,\beta) = 0\in \Burn_n(G).
$$
As an immediate application, we obtain: 

\begin{prop}
\label{prop:s-vanishing}
Consider the symbol
$$
(H, Y\actsfromleft K, \beta)\in \Burn_n(G), \quad \beta=(b_1,\ldots, b_r), \quad 1\le r\le n. 
$$
Let 
$$
\ell:=\min_{1\le j\le r}(\mathrm{ord}(b_j))
$$
be the smallest order of a character appearing in $\beta$. 
We have
$$
(H, Y \actsfromleft K(t_1,\ldots, t_{\ell-1}), \beta)=0 \in\Burn_{n+\ell-1}(G).
$$
Here $Y$ acts trivially on the variables $t_1,\ldots t_{\ell-1}$. 
\end{prop}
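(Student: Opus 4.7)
The plan is to reduce the symbol, by iterated application of a blowup-type relation from the defining relations of $\Burn$ in \cite{BnG}, to a sum of symbols each of which vanishes by the criterion recorded just above the statement (the $|I|\ge 2$ zero-sum criterion from \cite[Prop.~4.7]{BnG}). By the ordering relation $(\mathbf{O})$, we may assume that $b_1$ is a character of minimal order $\ell$.

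The key ingredient is the identity
\[
(H,Y\actsfromleft K(t),\beta)\;=\;(H,Y\actsfromleft K,(\beta,b_1))\;+\;(H,Y\actsfromleft K,(\beta,-b_1))
\]
in $\Burn_{n+1}(G)$, which I expect to obtain as a direct consequence of the blowup relation $(\mathbf{B1})$ applied to the $\bP^1$-fiber realizing the trivial $H$-action on the transcendental direction $t$. Geometrically, replacing the trivial $H$-action on a fixed $\bP^1$-factor by the $b_1$-twisted action produces two isolated fixed points, with tangent characters $b_1$ and $-b_1$ respectively on the fiber direction.

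Iterating the identity on the variables $t_1,\dots,t_{\ell-1}$ gives
\[
(H,Y\actsfromleft K(t_1,\dots,t_{\ell-1}),\beta)\;=\!\!\sum_{(\epsilon_1,\dots,\epsilon_{\ell-1})\in\{\pm 1\}^{\ell-1}}\!\!(H,Y\actsfromleft K,(\beta,\epsilon_1 b_1,\dots,\epsilon_{\ell-1}b_1)),
\]
and I verify that each summand vanishes:
\begin{itemize}
\item If some $\epsilon_i=-1$, the pair $\{b_1,\epsilon_ib_1\}=\{b_1,-b_1\}$ is a $2$-element subset of the character sequence summing to $0$, so the symbol vanishes by \cite[Prop.~4.7]{BnG}.
\item If all $\epsilon_i=+1$, the character sequence contains $\ell$ copies of $b_1$ (the one from $\beta$ together with the $\ell-1$ appended copies), whose sum equals $\ell b_1=0$ since $\mathrm{ord}(b_1)=\ell$; again the symbol vanishes.
\end{itemize}
Summing gives the desired vanishing $(H,Y\actsfromleft K(t_1,\dots,t_{\ell-1}),\beta)=0$.

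The main obstacle is verifying the $\bP^1$-fiber identity as a bona fide identity in $\Burn$ from the defining relations of \cite{BnG}; once this reduction step is in place, the combinatorics of the subset sums and the hypothesis $\mathrm{ord}(b_1)=\ell$ make the vanishing of every summand automatic. If the identity turns out not to be directly available from $(\mathbf{B1})$, an alternative route is to compare the Burnside classes of two $G$-equivariantly birational models --- one with the trivial action on an auxiliary factor and one with a cyclic $b_1$-twisted action producing only fixed strata whose symbols vanish by the same zero-sum criterion --- and extract the relation from the equality of classes.
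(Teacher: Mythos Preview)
Your overall strategy---reduce to symbols that vanish by the zero-sum criterion of \cite[Prop.~4.7]{BnG}---is exactly right, and it is the paper's strategy too. But the key identity you rely on,
\[
(H,Y\actsfromleft K(t),\beta)\;=\;(H,Y\actsfromleft K,(\beta,b_1))\;+\;(H,Y\actsfromleft K,(\beta,-b_1)),
\]
is not established. Your geometric heuristic (``replace the trivial $H$-action on the $\bP^1$-fiber by the $b_1$-twisted one'') compares two \emph{different} $G$-actions, not two birational models of the same action, so it does not produce a relation in $\Burn$. Moreover, the relation to invoke is $(\mathbf{B2})$, not $(\mathbf{B1})$: applied to a pair of \emph{equal} characters $(b_1,b_1)$, relation $(\mathbf{B2})$ yields the single-term identity
\[
(H,Y\actsfromleft K,(b_1,b_1,b_2,\dots,b_r))\;=\;(H,Y\actsfromleft K(t),(b_1,b_2,\dots,b_r)).
\]
In fact your two-term identity happens to be true, but only because the second summand $(H,Y\actsfromleft K,(\beta,-b_1))$ already contains the pair $\{b_1,-b_1\}$ and hence vanishes by the very zero-sum criterion you invoke later; half of your formula is vacuous.

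Once you use the correct one-term form of $(\mathbf{B2})$, your argument collapses to the paper's: the symbol
\[
(H,\,Y\actsfromleft K,\,(\underbrace{b_1,\dots,b_1}_{\ell},b_2,\dots,b_r))\in \Burn_{n+\ell-1}(G)
\]
vanishes since $\ell\cdot b_1=0$, and $\ell-1$ successive applications of $(\mathbf{B2})$ to pairs of equal $b_1$'s convert it into $(H,Y\actsfromleft K(t_1,\dots,t_{\ell-1}),\beta)$. No sum over $\{\pm 1\}^{\ell-1}$ and no case analysis on signs is needed.
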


\begin{proof}
We may assume that the minimum is at $b_1$. Consider the symbol
$$
(H, Y \actsfromleft K, (\underbrace{b_1, \ldots, b_{1}}_{\ell \text{ times }},b_2,\ldots, b_r)) \in \Burn_{n+\ell-1}(G).
$$
This symbol vanishes. 
Applying relation {\bf{(B2)}} iteratively, we obtain the claim. 
\end{proof}

A nonsingular projective variety $X$ with generically free $G$-action determines, as $G$-equivariant birational invariant, a class
\[ [X\actsfromright G]\in \Burn_n(G). \]
This is particularly easy to describe, when $X$ is in standard form with respect to a $G$-invariant simple normal crossing divisor, or more generally satisfies Assumption 2 of \cite{BnG}.
Then
\begin{align*}
[&X\actsfromright G]=\\
&\,\, \sum_{\substack{x_0\in X/G\\ x_0=[x],\,x\in X}}
\big(\text{generic stabilizer of $\overline{\{x\}}$},
\Gal(k(x)/k(x_0)),
(\cI_{\overline{\{x\}}}/\cI_{\overline{\{x\}}}^2)_x\big).
\end{align*}
Points of the quotient $X/G$ are in bijective correspondence with $G$-orbits of points of $X$; the sum is over $x_0\in X/G$, and $x$, in the sum, denotes an orbit representative.
Importantly, not only $k$-points, but all points are taken in the sum.
The residue field $k(x)$ is a Galois extension of $k(x_0)$.
The ideal sheaf
$\cI_{\overline{\{x\}}}$ of
the closure
$\overline{\{x\}}$
defines the coherent sheaf
$\cI_{\overline{\{x\}}}/\cI_{\overline{\{x\}}}^2$ on
$\overline{\{x\}}$, whose stalk at $x$ gives a faithful representation of the generic stabilizer of $\overline{\{x\}}$.
In order to get a representation with trivial space of invariants, $x$ has to be a maximal point with this generic stabilizer, so only finitely many points $x_0\in X/G$ yield nontrivial symbols.

\begin{rema}
\label{rema:vanishing}
An immediate corollary of Proposition~\ref{prop:s-vanishing} is that the symbols invariant is stably trivial: 
For any $G$-variety $X$ of dimension $n$, there exists a $d\in \bN$ such that class 
$$
[X\times \bP^{d}\actsfromright G] -(\mathrm{triv}, G\actsfromleft k(X)(t_1,\ldots, t_{d}), ()) =0 \in \Burn_{n+d}(G);  
$$
here $G$-acts trivially $\bP^d$, respectively, on the variables $t_1,\ldots, t_d$. 

This is in stark contrast with invariants originating in unramified cohomology. 
\end{rema}

When $X$ is replaced by a $G$-invariant open subvariety $U$, the same formula leads to a class in $\Burn_n(G)$, that is denoted by
\[ [U\actsfromright G]^{\mathrm{naive}}. \]
(A class $[U\actsfromright G]$ is also defined in \cite{BnG}, but plays no role in this paper.)

Let
\[ D=D_1\cup\dots\cup D_\ell \]
be a simple normal crossing divisor on $X$, where each $D_i$ is $G$-invariant and nonsingular.
Then, with $\cI:=\{1,\dots,\ell\}$, we have
\[
[X\actsfromright G]=[U\actsfromright G]^{\mathrm{naive}}+
\sum_{\emptyset\ne I\subseteq \cI}
\sum_{j\in \cJ_I}
\mathrm{ind}_{G_{I,j}}^G\big(\psi_I([\underline{D}^\circ_{I,j}\actsfromright G_{I,j}]^{\mathrm{naive}}_{(\mathcal{N}_{D_i/X})_{i\in I}})\big).
\]
Here,
\begin{itemize}
\item $\cJ_I$ indexes $G$-orbits of components of $D_I:=\bigcap_{i\in I}D_i$, with notation $D_{I,j}$ for the $G$-orbit of components corresponding
to $j\in \cJ_I$, and $\underline{D}_{I,j}$ for a chosen component of $D_{I,j}$.
\item $G_{I,j}$ is the maximal subgroup of $G$, for which $\underline{D}_{I,j}$ is invariant.
\item We denote by $D^\circ_I\subset D_I$, by $D^\circ_{I,j}\subset D_{I,j}$, and by
$\underline{D}^\circ_{I,j}\subset \underline{D}_{I,j}$,
the complement of all $D_j$, $j\notin I$.
\item The class 
$$
[\underline{D}^\circ_{I,j}\actsfromright G_{I,j}]^{\mathrm{naive}}_{(\mathcal{N}_{D_i/X})_{i\in I}} \in \Burn_{n,I}(G_{I,j}), 
$$ in the \emph{indexed equivariant Burnside group} 
\cite[\S 4]{KT-struct} \cite[\S 4]{KT-vector} is defined like a naive class in $\Burn_n(G_{I,j})$, but with additional data of characters associated with each of the indicated line bundles \cite[\S 5]{KT-vector} (in this case, the restrictions of the normal bundles $\mathcal{N}_{D_i/X}$).
\item The homomorphism
$$
\psi_I\colon \Burn_{n,I}(G_{I,j})\to \Burn_n(G_{I,j})
$$ 
appends the characters associated with the line bundles to the characters $\beta$ \cite[Rem.\ 4.1]{KT-struct}.
\item 
The induction homomorphism
$$
\mathrm{ind}_{G_{I,j}}^G : \Burn_n(G_{I,j}) \to \Burn_n(G)
$$
is defined in 
\cite[Defn.\ 3.1]{KT-vector}.
\end{itemize}
This formula holds by \cite[Prop.\ 4.8]{KT-vector}; see, also \cite[Exa.\ 5.12]{KT-vector}.

The additional characters, attached to symbols in the indexed equivariant Burnside group, may be manipulated, e.g., by applying any element of $\Aut(\Z^I)$, where $\Z^I$ denotes $\bigoplus_{i\in I}\Z$.
For instance, when $J\subseteq I$ and we apply the element
\[
\tau_{I,J}\in \Aut(\Z^I), \qquad \tau_{I,J}(e_j):=\begin{cases}
{\displaystyle\sum_{\substack{i\in I,\,i\le j\\ i>j'\,\forall\, j'\in J,\, j'<j}}e_i},&\text{if $j\in J$},
\\
\ \ \ \ \ \ e_j,&\text{if $j\notin J$},
\end{cases}
\]
of \cite[Exa.\ 4.1]{KT-vector},
to a symbol $(H\subseteq H',Y\actsfromleft K,\beta,\gamma)\in \Burn_{n,I}(G)$ with the additional characters
\[ \gamma=(c_i)_{i\in I}, \]
we get the symbol
$\tau_{I,J}(H\subseteq H',Y\actsfromleft K,\beta,\gamma):=(H\subseteq H',Y\actsfromleft K,\beta,\tilde\gamma)$,
\[ \tilde\gamma=(\tilde c_i)_{i\in I}, \]
where $\tilde c_j$ is the sum of $c_i$ over $i\le j$ with $i>j'$ for all $j'\in J$, $j'<j$ for $j\in J$, and $\tilde c_j=c_j$ otherwise.

As a generalization of the map $\psi_I$ there is the map
\[ \psi_{I,J}\colon \Burn_{n,I}(G)\to \Burn_{n,J}(G), \]
which appends just the characters indexed by elements of $I\setminus J$ to the characters $\beta$; see \cite[Defn.\ 4.2]{KT-vector}.

\section{Computing the class in the Burnside group}
\label{sect:class}
We wish to compute a class in an equivariant Burnside group, associated with a given $G$-action on $\Delta$. 
%Generally, $\Delta$ may have several components, 
%and we obtain a collection of classes, indexed by components $\Delta/T$.
We have the exact sequence of algebraic groups \eqref{eqn.DeltamodT} and the notation introduced in \S \ref{sect:arr-diag}.
%\[
%\begin{array}{ccccccccc}
%  &     &    &  & \delta    &   \mapsto  &    \bar{\delta} &     &   \\
%  &     &    &  & \vin    &          &    \vin &     &   \\
%1 & \to &  T & \to & \Delta & \to & \Delta/T        & \to  & 1 \\ 
%\end{array}
%\]
We fix $\bar\delta\in \Delta/T$, the class of some $\delta\in \Delta$.
Besides the stabilizer $G_{\bar\delta}$ of $\bar\delta$, there is
\[ G_\delta:=\ker(\nu_{\bar\delta}), \]
which is equal to the stabilizer of $\delta\in \Delta$, provided that $\delta$ is a suitably general lift of $\bar\delta\in \Delta/T$.
The group $G_\delta$ is the generic stabilizer of the induced action
of $G_{\bar\delta}$ on the component $T\bar\delta$ of $\Delta$.

We recall, $\bar\delta\in \Delta/T$ indexes a component $X\bar\delta$ of $\mathbb X$.
This has a class
\begin{equation}
\label{eqn.classdeltacomponent}
[X\bar\delta \actsfromright G_{\bar\delta}/G_\delta]\in \Burn_n(G_{\bar\delta}/G_\delta).
\end{equation}
Our goal is to compute the class of $\mathbb X\actsfromright G$, which we understand to mean the collection of classes \eqref{eqn.classdeltacomponent} for all $\bar\delta\in \Delta/T$.

We recall the lattice $\cL'=\cL'(T)$ from \S \ref{sect:latt} and make a choice of fan $\Sigma$ and positive integer $r$ as in \S \ref{sect:cm}; we work on the model $\mathbb X_{\Sigma,\cL',[r]}$.
Exactly as in Proposition \ref{prop.standardmodel}, for the action of $G_{\bar\delta}/G_\delta$ we have $X_{\Sigma,\cL',[r]}\bar\delta$ in standard form, with respect to the union of the strict transform of the toric boundary and the exceptional divisors of the De Concini-Procesi iterated blowup procedure.

We introduce $G_{\bar\delta}$-invariant divisors.
Let
\[ \Gamma'_1,\dots,\Gamma'_{\ell_{\bar\delta}}\in \cL' \]
be a choice of conjugacy class representatives of elements of $\cL'\setminus \{\ker(\nu)\}$.
We set
\[ \mathbb D_i:=\bigcup_{\Gamma'\text{ conjugate to }\Gamma'_i} \mathbb D_{\Gamma'}. \]
Then $\mathbb D=\mathbb D_1\cup\dots\cup \mathbb D_{\ell_{\bar\delta}}$ is a
simple normal crossing divisor on $\mathbb X_{\Sigma,\cL',[r]}$, with each $\mathbb D_i$ invariant under $G_{\bar\delta}$.
Now suppose 
$$
I\subseteq \cI_{\bar\delta}:=\{1,\dots,\ell_{\bar\delta}\},
$$
with $\mathbb D_I$ nonempty.
We take $\cJ_I$ to be the set of conjugacy classes of chains in $\cL'$, with one element from each conjugacy class, indexed by an element of $I$.
Then $j\in \cJ_I$ indexes an orbit $\mathbb D_{I,j}$ of $\mathbb D_\Lambda$ for a representative chain $\Lambda$ of the conjugacy class of $j$.
The maximal subgroup, under which $\mathbb D_\Lambda$ is invariant, is
\[ N_{G_{\bar\delta}}(\Lambda):=N_{G_{\bar\delta}}(\Gamma^1)\cap\dots\cap N_{G_{\bar\delta}}(\Gamma^t), \]
the stabilizer of $\Lambda$ under the conjugation action of ${G_{\bar\delta}}$.

With the subspaces $V_i=N_{\Gamma^i}\otimes k$ of
\[ V:=N\otimes k \]
for $1\le i\le t$, appearing in the description of a point of $\mathbb D_\Lambda$, not in any deeper stratum, we
have from Lemma \ref{lem.divisors}, for $1\le i\le k$,
a $N_{G_{\bar\delta}}(\Lambda)$-equivariant morphism $\mathbb D_\Lambda\to
\bP(V/V_i)$.
This is surjective when $i=t$ and has image $\bP(V_{i+1}/V_i)$ for $i<t$.

For $\bar\delta\in \Delta/T$ the stabilizer $N_{G_{\bar\delta}}(\Lambda)$ acts on the fiber
$\vartheta_{[r]}^{-1}(\bar\delta)$ over $\bar\delta$ in $T'_{[r]}/T'$,
where $T'$ is as in \eqref{eqn.chainfirstT}.
We let $\mathcal{K}_j$ denote the set of $N_{G_{\bar\delta}}(\Lambda)$-orbits for this action.

\begin{prop}
\label{prop.mainformula}
Let $G$ act on $\Delta$,
and let $\bar\delta\in \Delta/T$.
Let us write
\[
[X\bar\delta \actsfromright G_{\bar\delta}/G_\delta]=A_{\bar{\delta}} +  B_{\bar{\delta}} 
\]
in
$$
\Burn_n(G_{\bar\delta}/G_\delta),
$$
where 
$A_{\bar{\delta}}$ records the contribution from $\mathbb X^{\circ}\cap X\bar\delta$, and 
$B_{\bar{\delta}}$, the contribution from strata obtained from exceptional divisors in the De Concini-Procesi model for $X$.
Then
$$
A_{\bar{\delta}}=
\sum_{[\sigma]\in \Sigma/G_{\bar\delta}}
(\Stab(\sigma)_\delta,
\Stab(\sigma)/\Stab(\sigma)_\delta
\actsfromleft k(T^\sigma \bar\delta),\rho_{\sigma}),
$$
where
\begin{itemize}
\item the sum is over $G_{\bar\delta}$-orbits of $\Sigma$; for each orbit an orbit representative $\sigma\in \Sigma$ is chosen,
\item 
the action of $\Stab(\sigma)$ on $\mathbb X^{\circ}\cap T^\sigma\bar\delta$ has constant stabilizer
$\Stab(\sigma)_\delta$,
\item
the representation
\[
\rho_{\sigma}\colon \Stab(\sigma)_\delta\to \GL(\mathfrak J/\mathfrak J^2)
\]
is defined, using the ideal
\[
\mathfrak J:=(x)_{x\in (\sigma^\vee\cap M)\setminus (\sigma^\perp\cap M)}(k[\sigma^\vee\cap M]/(x-\delta(x))_{x\in \sigma^\perp\cap M}k[\sigma^\vee\cap M]),
\]
\end{itemize}
and 
\begin{align*}
&B_{\bar{\delta}}=\\
&\ \sum_{\emptyset\ne I\subseteq \cI_{\bar\delta}} 
\sum_{[\Lambda]\in \cJ_I}
\sum_{[\bar\tau]\in \mathcal{K}_j}
\mathrm{ind}_{ N_{G_{\bar\delta}}(\Lambda)_{\bar\tau}/G_{\delta}}^{G_{\bar\delta}/G_\delta} \big( \psi_{\{ 1,\dots, t\}} \big(
[D_\Lambda^\circ\bar\tau\actsfromright
 N_{G_{\bar\delta}}(\Lambda)_{\bar\tau}/G_{\delta}
 %N_G(\Lambda)_{\bar\tau}/G_\delta
 ]^{\mathrm{naive}}_{(\cO(-1))} \big)
\big).
\end{align*}
\end{prop}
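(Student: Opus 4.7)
The plan is to apply the general formula of \S \ref{sect:ebg}, which expresses the equivariant Burnside class of a standard-form variety as the naive class of an open complement plus correction terms from strata of a simple normal crossing divisor. Proposition \ref{prop.standardmodel} ensures that $X\bar\delta = X_{\Sigma,\cL',[r]}\bar\delta$ is in standard form for the $G_{\bar\delta}/G_\delta$-action with respect to the union of the strict transform of the toric boundary and the exceptional divisors of the De Concini--Procesi blowup. I will apply the general formula with the open complement $U := \mathbb X^\circ\cap X\bar\delta$, which by construction still meets both the toric boundary and $T\bar\delta$, together with the SNC divisor $\bigcup_{i\in\cI_{\bar\delta}}\mathbb D_i\cap X\bar\delta$, where each $\mathbb D_i$ groups the exceptional components in a single $G_{\bar\delta}$-conjugacy class of elements of $\cL'$. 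The two resulting summands will provide $A_{\bar\delta} = [U\actsfromright G_{\bar\delta}/G_\delta]^{\mathrm{naive}}$ and $B_{\bar\delta}$ as the contribution from non-empty $I$.

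For the formula for $A_{\bar\delta}$, the key input is the stabilizer distinguishing property of $\mathbb X^\circ$ for the $G_{\bar\delta}$-action built into the construction in \S \ref{sect:cm}: on each $T$-orbit $T^\sigma\bar\delta$ indexed by a cone $\sigma\in\Sigma$, every point of $T^\sigma\bar\delta\cap\mathbb X^\circ$ shares the same stabilizer $\Stab(\sigma)_\delta$. The naive-class symbols thus index the $G_{\bar\delta}$-orbits of $\Sigma$. For a representative $\sigma$ the residue field at the generic point of $T^\sigma\bar\delta$ is $k(T^\sigma\bar\delta)$, with $\Stab(\sigma)/\Stab(\sigma)_\delta$ acting as its Galois group, and the conormal representation $\rho_\sigma$ is read off from the toric affine chart $\Spec(k[\sigma^\vee\cap M])$ around $\delta$: the closure $D_\sigma\bar\delta$ is locally cut out by the characters in $(\sigma^\vee\cap M)\setminus(\sigma^\perp\cap M)$, and modulo the square of the defining ideal these characters span the conormal space with the stated $\Stab(\sigma)_\delta$-action, recovering $\mathfrak J/\mathfrak J^2$.

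For $B_{\bar\delta}$, the analysis of \S \ref{sect:cm} shows that the non-empty strata of $\mathbb D$ correspond to chains $\Lambda = \{\Gamma^1\supset\dots\supset\Gamma^t\}\subset \cL'\setminus\{\ker(\nu)\}$, with components of $\mathbb D_\Lambda$ parametrized by $T'_{[r]}/T'$ via \eqref{eqn.DLambdacomponents}, where $T' = T_{\Gamma^1}$. A component $D_\Lambda\bar\tau$ lies in $X\bar\delta$ precisely when $\vartheta_{[r]}(\bar\tau) = \bar\delta$. The $G_{\bar\delta}$-orbits of such components will then be indexed by pairs $([\Lambda],[\bar\tau])$ with $[\Lambda]\in\cJ_I$ and $[\bar\tau]\in\mathcal{K}_j$, and the stabilizer of a chosen component is $N_{G_{\bar\delta}}(\Lambda)_{\bar\tau}$ --- which automatically contains $G_\delta$, since the latter acts trivially on all of $X\bar\delta$. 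The general formula will then produce the induction term with the indexed naive class $[D_\Lambda^\circ\bar\tau\actsfromright N_{G_{\bar\delta}}(\Lambda)_{\bar\tau}/G_\delta]^{\mathrm{naive}}_{(\cO(-1))}$, whose index consists of the restrictions $\mathcal{N}_{\mathbb D_{\Gamma^i}/\mathbb X_{\Sigma,\cL',[r]}}|_{D_\Lambda^\circ\bar\tau}$; by Lemma \ref{lem.divisors}(iv) these are pullbacks of the tautological $\cO(-1)$ from the associated projectivizations, justifying the shorthand $(\cO(-1))$. The homomorphism $\psi_{\{1,\dots,t\}}$ will then append the associated characters to $\beta$, and induction to $G_{\bar\delta}/G_\delta$ will complete the term.

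The main technical obstacle will be to match the two explicit indexings --- $G_{\bar\delta}$-orbits of cones for $A_{\bar\delta}$, and triples $(I,[\Lambda],[\bar\tau])$ for $B_{\bar\delta}$ --- with the abstract $(I,j)$-indexing of the general formula of \S \ref{sect:ebg}, and to upgrade the divisor-class identification of Lemma \ref{lem.divisors}(iv) into the line-bundle identification needed for the indexed naive class. Both reductions should be essentially bookkeeping once the stabilizer distinguishing property of $\mathbb X^\circ$ and the iterated-blowup description of $\mathbb X_{\Sigma,\cL',[r]}$ are in hand.
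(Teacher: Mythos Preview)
Your proposal is correct and follows exactly the paper's approach: apply the general formula from \S\ref{sect:ebg} to the standard-form model $X_{\Sigma,\cL',[r]}\bar\delta$, with the naive class of $\mathbb X^\circ\cap X\bar\delta$ giving $A_{\bar\delta}$ and the stratified boundary contributions giving $B_{\bar\delta}$, the extra sum over $[\bar\tau]\in\mathcal K_j$ arising from the component decomposition \eqref{eqn.DLambdacomponents}. The paper's own proof is in fact a two-sentence pointer to \S\ref{sect:ebg}; your write-up supplies more of the bookkeeping. One small caution: the restrictions $\mathcal N_{\mathbb D_{\Gamma^i}/\mathbb X_{\Sigma,\cL',[r]}}|_{D_\Lambda^\circ\bar\tau}$ are not literally pullbacks of a single $\cO(-1)$ but the twisted tensor products recorded in convention \eqref{eqn.convention}; Lemma \ref{lem.divisors}(iv) identifies $\cO_{\bP(V/V_i)}(-1)$ with the sum $\sum_{\Gamma'\supseteq\Gamma^i}\mathbb D_{\Gamma'}$, and taking successive differences yields the individual normal bundles in the form stated there.
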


By analogy with \cite[Conv.\ 8.1]{KT-vector}, here $(\cO(-1))$ denotes the following collection of line bundles, indexed by $\{1,\dots,t\}$:
\begin{equation}
\label{eqn.convention}
\cO_{\bP(V_2/V_1)}(-1),\cO_{\bP(V_2/V_1)}(1)\otimes \cO_{\bP(V_3/V_2)}(-1),\dots.
\end{equation}

\begin{proof}
The contribution from $\mathbb X^{\circ}\cap X\bar\delta$ is obtained directly from the formula for the naive class in $\Burn_n(G_{\bar\delta}/G_\delta)$,
from Section \ref{sect:ebg}.
The term $B_{\bar\delta}$ is taken from the formula in Section \ref{sect:ebg}, where the additional sum over orbit representatives of $\mathcal{K}_j$ accounts for the components in \eqref{eqn.DLambdacomponents}.
\end{proof}

Let us fix a chain \eqref{eqn.chain}, which indexes a stratum $\mathbb D_\Lambda\subset \mathbb X_{\Sigma,\cL',[r]}$.
Let $\bar\delta\in \Delta/T$, and
with $T'$ as in \eqref{eqn.chainfirstT}, let
$\bar\tau\in \vartheta^{-1}_{[r]}(\bar\delta)$.
The group $N_{G_{\bar\delta}}(\Lambda)_{\bar\tau}$ acts on
\begin{equation}
\label{eqn.whatNGLambdaactson}
X'\bar\tau, \quad \bP(V_2/V_1), \quad \dots, \quad \bP(V_t/V_{t-1}), \quad \bP(V/V_t),
\end{equation}
and we have an $N_{G_{\bar\delta}}(\Lambda)_{\bar\tau}$-equivariant birational morphism from $D_\Lambda\bar\tau$ to the product of the varieties in \eqref{eqn.whatNGLambdaactson}.
Therefore:

\begin{lemm}
\label{lem.DLambda}
For $\bar\tau\in T'_{[r]}/T'$, mapping to $\bar\delta\in \Delta/T$, we have
\begin{align*}
[&D_\Lambda \bar\tau\actsfromright N_{G_{\bar\delta}}(\Lambda)_{\bar\tau}/G_\delta]_{(\cO(-1))}=\\
&\qquad[X'\bar\tau\times \bP(V_2/V_1)\times \dots\times \bP(V/V_t)\actsfromright N_{G_{\bar\delta}}(\Lambda)_{\bar\tau}/G_\delta]_{(\cO(-1))}.
\end{align*}
in $\Burn_{n,\{1,\dots,t\}}(N_{G_{\bar\delta}}(\Lambda)_{\bar\tau}/G_\delta)$.
\end{lemm}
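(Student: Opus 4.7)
The plan is threefold: construct an explicit equivariant birational map between the two sides, verify that the indexed line bundle data correspond under this map, and then conclude by the fact that the naive indexed Burnside class is computed from generic-point data preserved by such a correspondence.

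First, I would produce the equivariant birational equivalence from the point-theoretic description of $\mathbb{X}_{\Sigma,\cL',[r]}$ given just before Lemma \ref{lem.divisors}. A general point of the stratum $D_\Lambda\bar\tau$ is a tuple $(x, V_1 \subset W_1 \subset \dots \subset V_t \subset W_t)$ where $V_i = N_{\Gamma^i}\otimes k$ is forced by the chain, $x$ lies in the $\bar\tau$-component $X'\bar\tau$ of $X'_{[r]}$ (since $\Gamma^1$ is the largest distinguished subgroup with $x$ in its fixed locus), and the lines $W_i/V_i \in \bP(V_{i+1}/V_i)$ for $i<t$ and $W_t/V_t \in \bP(V/V_t)$ range freely subject only to avoiding deeper strata. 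This gives a dense open immersion into, and hence an equivariant birational map to, the product $X'\bar\tau \times \bP(V_2/V_1) \times \dots \times \bP(V_t/V_{t-1}) \times \bP(V/V_t)$. Equivariance under $N_{G_{\bar\delta}}(\Lambda)_{\bar\tau}$ is immediate: the group normalizes each $\Gamma^i$, hence preserves each $V_i$ and acts on each quotient $V_{i+1}/V_i$ and $V/V_t$; since it stabilizes $\bar\tau$, it acts on $X'\bar\tau$.

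Second, I would identify the indexed line bundles. The left-hand class records the equivariant line bundles $L_i := \cO_{\mathbb{X}_{\Sigma,\cL',[r]}}(-\mathbb{D}_{\Gamma^i})|_{D_\Lambda\bar\tau}$, while the right-hand class records those of convention \eqref{eqn.convention}. Applying Lemma \ref{lem.divisors}(iv) with $\Gamma^1$ replaced by each $\Gamma^i$, and using the combinatorial observation that among $\Gamma' \in \cL'$ with $\Gamma' \supseteq \Gamma^i$ only $\Gamma^1,\dots,\Gamma^i$ have $\mathbb{D}_{\Gamma'}$ containing $D_\Lambda\bar\tau$, I would obtain an equivariant isomorphism
\[
L_1 \otimes \dots \otimes L_i \;\cong\; \pi_i^*\, \cO_{\bP(V/V_i)}(-1),
\]
where $\pi_i\colon D_\Lambda\bar\tau \to \bP(V/V_i)$ is the morphism from Lemma \ref{lem.divisors}. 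The flag description forces $\pi_i$ to factor through $\bP(V_{i+1}/V_i) \hookrightarrow \bP(V/V_i)$ for $i<t$, while $\pi_t$ lands on $\bP(V/V_t)$. Telescoping then yields $L_1 \cong \cO_{\bP(V_2/V_1)}(-1)$ and, for $i \geq 2$, $L_i \cong \cO_{\bP(V_{i+1}/V_i)}(-1) \otimes \cO_{\bP(V_i/V_{i-1})}(1)$, matching \eqref{eqn.convention} term by term.

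Third, I would conclude by noting that the naive class in the indexed equivariant Burnside group, as recalled in Section \ref{sect:ebg}, is determined by generic-stabilizer data on orbits together with the characters of the indexed line bundles at maximal points of orbits. An equivariant birational map of smooth varieties, together with equivariant isomorphisms of the indexed line bundles on a common dense open subset, preserves all of this data, so the two classes agree. The main obstacle is the combinatorial claim in the second step, that the only divisors $\mathbb{D}_{\Gamma'}$ with $\Gamma' \supseteq \Gamma^i$ containing the stratum $D_\Lambda\bar\tau$ are $\mathbb{D}_{\Gamma^1}, \dots, \mathbb{D}_{\Gamma^i}$; once this is verified, the telescoping identification of line bundles, and hence the lemma, follow mechanically.
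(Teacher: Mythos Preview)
Your first and third steps are precisely the paper's argument, just spelled out more explicitly: the paragraph immediately before Lemma~\ref{lem.DLambda} records the $N_{G_{\bar\delta}}(\Lambda)_{\bar\tau}$-equivariant birational morphism from $D_\Lambda\bar\tau$ to the product, and the lemma follows since the (non-naive) indexed Burnside class is an equivariant birational invariant when the indexed line bundles correspond.

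Your second step, however, is both unnecessary and not quite right as stated. It is unnecessary because the subscript $(\cO(-1))$ on \emph{both} sides already refers, by convention~\eqref{eqn.convention}, to pullbacks from the projective spaces $\bP(V_{i+1}/V_i)$ and $\bP(V/V_t)$; the birational map is compatible with these projections by construction, so the line bundles correspond automatically and no passage through the normal bundles $\cO(\mathbb D_{\Gamma^i})|_{D_\Lambda\bar\tau}$ is required. It is not quite right because the identity you derive from Lemma~\ref{lem.divisors}(iv), restricted to $D_\Lambda\bar\tau$, reads
\[
\pi_i^*\,\cO_{\bP(V/V_i)}(-1)\;\cong\;\bigotimes_{\Gamma'\supseteq\Gamma^i}\cO(\mathbb D_{\Gamma'})\big|_{D_\Lambda\bar\tau},
\]
and the right-hand side contains, besides the $i$ factors you keep, contributions from those $\Gamma'\supseteq\Gamma^i$ with $\Gamma'\notin\Lambda$ for which $\Lambda\cup\{\Gamma'\}$ is still a chain; these give $\cO(D_{\Lambda\cup\{\Gamma'\}})$, supported in deeper strata, hence nontrivial on $D_\Lambda\bar\tau$ though trivial on $D_\Lambda^\circ\bar\tau$. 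Your ``main obstacle'' (that only $\Gamma^1,\dots,\Gamma^i$ \emph{contain} $D_\Lambda\bar\tau$) is true, but containment is not the relevant condition for restricting a line bundle. Since this discrepancy lives in deeper strata, your telescoping is valid generically, which is what matters for the birational-invariance argument in step three; but the clean global identity you assert does not hold. The shortest fix is simply to drop step two.
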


For $\Lambda\ne\emptyset$, the expression on the right-hand side in Lemma \ref{lem.DLambda} may be taken as known, in the recursive determination of the classes \eqref{eqn.classdeltacomponent}.

\begin{theo}
\label{thm.main}
The class
\[
[X\bar\delta\actsfromright G_{\bar\delta}/G_\delta]=A_{\bar\delta}+B_{\bar\delta}
\]
in $\Burn_n(G_{\bar\delta}/G_\delta)$ may be computed by applying the formula for $A_{\bar\delta}$ from Proposition \ref{prop.mainformula} directly, and by computing
the classes
$$
[D^\circ_\Lambda\bar\tau\actsfromright N_{G_{\bar\delta}}(\Lambda)_{\bar\tau}/G_\delta]_{(\cO(-1))}^{\mathrm{naive}}\in 
\Burn_{n,\{1,\dots,t\}}(N_{G_{\bar\delta}}(\Lambda)_{\bar\tau}/G_\delta)
$$
appearing in the
formula for
$B_{\bar\delta}$ from Proposition \ref{prop.mainformula}
in a recursive fashion, starting with large $t=|I|$, using the formula
\begin{align*}
[&D^\circ_\Lambda\bar\tau\actsfromright N_{G_{\bar\delta}}(\Lambda)_{\bar\tau}/G_\delta]_{(\cO(-1))}^{\mathrm{naive}}=[D_\Lambda\bar\tau\actsfromright N_{G_{\bar\delta}}(\Lambda)_{\bar\tau}/G_\delta]_{(\cO(-1))} \\
&-\sum_{[\Lambda']}
\mathrm{ind}_{N_{G_{\bar\delta}}(\Lambda')_{\bar\tau}/G_\delta}^{N_{G_{\bar\delta}}(\Lambda)_{\bar\tau}/G_\delta}\big(\psi_{I',J}
\big(\tau_{I',J}[D^\circ_{\Lambda'}\bar\tau\actsfromright N_{G_{\bar\delta}}(\Lambda')_{\bar\tau}/G_\delta]_{(\cO(-1))}^{\mathrm{naive}}\big)\big)
\\
&-\sum_{[\Lambda'']}\sum_{[\bar\tau'']} \mathrm{ind}_{N_{G_{\bar\delta}}(\Lambda'')_{\bar\tau''}/G_\delta}^{N_{G_{\bar\delta}}(\Lambda)_{\bar\tau}/G_\delta}\big(\psi_{I'',J}
\big(\tau_{I'',J}[D^\circ_{\Lambda''}\bar\tau''\actsfromright N_{G_{\bar\delta}}(\Lambda'')_{\bar\tau''}/G_\delta]_{(\cO(-1))}^{\mathrm{naive}}\big)\big).
\end{align*}
The first sum is over $N_G(\Lambda)_{\bar\tau}$-conjugacy classes of chains
\[ \Lambda':\ \ \Gamma^1=\Gamma'^1\supset \Gamma'^2\supset\dots\supset \Gamma'^{t'} \]
strictly containing $\Lambda$ with the same largest member
$\Gamma^1=\Gamma'^1$; we put $I':=\{ 1,\ldots, t'\}$. 
The second sums are over
$N_G(\Lambda)_{\bar\tau}$-conjugacy classes of chains
\[ \Lambda'':\ \ \Gamma''^1\supset
\Gamma''^2\supset\dots\supset
\Gamma''^{t''} \]
containing $\Lambda$, with
$\Gamma''^1\supsetneq \Gamma^1$,
and $N_G(\Lambda'')_{\bar\tau}$-orbit representatives $\bar\tau''$ of the
fiber of
\[ T''_{[r]}/T''\to T'_{[r]}/T' \]
over $\bar\tau$, where $T''$ denotes $T_{\Gamma''^1}$;
we put $I'':=\{ 1, \ldots, t''\}$. 
In each sum, $J$ records the indices of the members of $\Lambda$ and is identified in an order-preserving fashion with $\{1,\dots,t\}$ to land in
$\Burn_{n,\{1,\dots,t\}}(N_{G_{\bar\delta}}(\Lambda)_{\bar\tau}/G_\delta)$.
\end{theo}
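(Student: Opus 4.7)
The plan is to reduce the theorem to an application of the general simple-normal-crossing formula recalled in Section~\ref{sect:ebg}, applied to each stratum $D_\Lambda \bar\tau$ equipped with the induced SNC structure from the ambient divisor $\mathbb D$. First, the formula for $A_{\bar\delta}$ is just the direct computation of the naive class contribution from $\mathbb X^\circ \cap X\bar\delta$: by Proposition~\ref{prop.standardmodel} we are in standard form, Lemma~\ref{lemm:preim} identifies the generic stabilizer along $T^\sigma\bar\delta$ as $\Stab(\sigma)_\delta$ (whose quotient acts faithfully on the residue field), and the normal bundle data giving $\rho_\sigma$ is read off directly from the affine chart $\Spec(k[\sigma^\vee \cap M])$ as in \S \ref{sect:qo}. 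This coincides with the $A_{\bar\delta}$ formula from Proposition~\ref{prop.mainformula}, which we already have.

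For the recursive formula for the naive class on $D_\Lambda^\circ \bar\tau$, I would start with $[D_\Lambda \bar\tau \actsfromright N_{G_{\bar\delta}}(\Lambda)_{\bar\tau}/G_\delta]_{(\cO(-1))}$ expressed, via the SNC formula of Section~\ref{sect:ebg}, as the naive class of $D_\Lambda^\circ \bar\tau$ plus a sum over nonempty subsets of divisors in $D_\Lambda \bar\tau \setminus D_\Lambda^\circ \bar\tau$. The key point is to identify these divisors: by Proposition~\ref{prop:trans} and Lemma~\ref{lem.divisors}, the strata of $\mathbb X_{\Sigma,\cL',[r]}$ properly contained in $\mathbb D_\Lambda$ are precisely the $\mathbb D_{\tilde\Lambda}$ for chains $\tilde\Lambda \supsetneq \Lambda$. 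Such $\tilde\Lambda$ fall into two mutually exclusive classes: those whose top element equals $\Gamma^1$ (call these $\Lambda'$), and those with top element strictly larger than $\Gamma^1$ (call these $\Lambda''$). This dichotomy explains the two sums in the statement.

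Next I would handle the component and orbit bookkeeping. For $\Lambda'$, the ``top torus'' is unchanged, so $T'_{[r]}/T' = T_{\Gamma'^1}{}_{[r]}/T_{\Gamma'^1}$, and the component $\bar\tau$ persists unchanged; orbits of such extensions contribute the first sum, with induction along $N_{G_{\bar\delta}}(\Lambda')_{\bar\tau}/G_\delta \subseteq N_{G_{\bar\delta}}(\Lambda)_{\bar\tau}/G_\delta$. For $\Lambda''$, the associated subtorus $T'' = T_{\Gamma''^1}$ is strictly smaller than $T' = T_{\Gamma^1}$, so the components of $\mathbb D_{\Lambda''}$ are indexed by $T''_{[r]}/T''$, which maps surjectively (via \S \ref{sect:ecdg}, $\vartheta_{[r]}$) onto $T'_{[r]}/T'$; the components lying over $\bar\tau$ are indexed by the fiber, and we choose $N_{G_{\bar\delta}}(\Lambda'')_{\bar\tau}$-orbit representatives $\bar\tau''$ of that fiber. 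This gives the second sum.

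The main obstacle, and the step I would be most careful about, is the character bookkeeping encoded by $\psi_{I',J}$, $\psi_{I'',J}$ and the permutations $\tau_{I',J}$, $\tau_{I'',J}$. The collection of line bundles $(\cO(-1))$ recorded in \eqref{eqn.convention} is arranged relative to the chain $\Lambda$: the $i$-th entry is $\cO_{\bP(V_{i+1}/V_i)}(1)\otimes\cO_{\bP(V_{i+2}/V_{i+1})}(-1)$ (with the convention for $i=t$). When we pass to a finer chain $\Lambda'$ or $\Lambda''$, the normal bundles to the newly added divisors fit into this telescoping pattern in a prescribed way, determined by Lemma~\ref{lem.divisors}(iv): each new divisor $\mathbb D_{\Gamma'}$ whose class lies between two members of $\Lambda$ (or outside of it) contributes an $\cO(-1)$ factor that combines with its neighbors in the chain according to the definition of $\tau_{I',J}$, after which $\psi_{I',J}$ appends those indices not in $J$ to the character sequence $\beta$. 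Verifying that the composite $\psi_{I',J}\circ\tau_{I',J}$ applied to the naive class of $D_{\Lambda'}^\circ \bar\tau$ (respectively $D_{\Lambda''}^\circ \bar\tau''$) exactly matches the contribution of the stratum indexed by $\Lambda'$ (respectively $\Lambda''$) in the SNC decomposition of $D_\Lambda \bar\tau$ is the technical heart of the proof; it follows from tracing through the construction of the indexed Burnside group pushforwards in \cite[\S 4--5]{KT-vector} and applying them to the line bundle collection \eqref{eqn.convention}. Once this matching is established, solving the SNC identity for $[D_\Lambda^\circ \bar\tau \actsfromright N_{G_{\bar\delta}}(\Lambda)_{\bar\tau}/G_\delta]^{\mathrm{naive}}_{(\cO(-1))}$ gives exactly the recursive formula of the theorem.
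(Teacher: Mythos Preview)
Your proposal is correct and follows essentially the same approach as the paper: apply the simple normal crossing formula of \S\ref{sect:ebg} (in its indexed version) to $D_\Lambda\bar\tau$, identify the deeper strata as the $\mathbb D_{\tilde\Lambda}$ for chains $\tilde\Lambda\supsetneq\Lambda$ split according to whether the top element changes, and invoke Lemma~\ref{lem.divisors}(iv) to justify the $\tau_{I',J}$, $\tau_{I'',J}$ corrections of the line-bundle characters. The paper's own proof is much terser but records exactly these two ingredients; your expanded account of the component bookkeeping (via the fibers of $T''_{[r]}/T''\to T'_{[r]}/T'$) and of the telescoping in \eqref{eqn.convention} is accurate and helpful.
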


\begin{proof}
This follows from the formula in \S \ref{sect:ebg}, and the evident analogous formula for indexed equivariant Burnside groups.
By Lemma \ref{lem.divisors}, application of $\tau_{I',J}$ and $\tau_{I'',J}$ corrects the divisor characters in the first, respectively second sums in the formula.
\end{proof}

\section{Dimension 2}
\label{sect:2}

There is only one nontrivial action  on $\mathbb G_m$, namely $t\mapsto t^{-1}$, and it is linearizable. 

In dimension 2, it is known \cite[II.4.9, Exa.\ 7]{vosk} that every action on $\G_m^2$ factors through a subgroup of
\[ 
\mathfrak D_4\text{ or }C_2\times \fS_3\subset \GL_2(\Z).
\]
Subgroups of $\mathfrak D_4$ give rise to a regular action on $\bP^1\times \bP^1$.
Projecting from the identity of
$\G_m^2\subset \bP^1\times \bP^1$ gives an induced regular action on $\bP^2$, and this is linear.
So, we focus on
\[ G:=C_2\times \fS_3, \]
whose action we realize by
inverse and permutation on the coordinates of
\[ T\subset \G_m^3,\qquad T:=\{(t_1,t_2,t_3)\,|\,t_1t_2t_3=1\}. \]
This extends to a regular $G$-action on a del Pezzo surface $X$ of degree $6$; the action is obtained by regularizing the $\fS_3$ permutation action on the standard coordinates of 
$\bP^2$ together with the Cremona involution. 
As mentioned in the introduction:
\begin{itemize}
\item 
This action is not linearizable, by \cite{isk-s3};
there, the proof relied on the (equivariant) Minimal Model Program, specifically, on the classification of Sarkisov links. In \cite{HKTsmall} we provided an alternative proof, using partial information 
about the class of the action in the equivariant Burnside group.
\item 
There are no cohomological obstructions to stable linearizability,
$$
\rH^1(G', \Pic(X)) =0,
$$
for all subgroups $G'\subseteq G$. 
\item For every subgroup $G'\subsetneq G$, the
$G'$-action on $T$ is linearizable \cite[Section 9]{lemire}. 
\item The $G$-action is stably linearizable \cite[Prop. 9.11]{lemire}. 
\end{itemize}

%we compute the full class of the action.
%and use it to settle an open problem from 
%\cite[Remark 9.13]{lemire}.

We apply the procedure from Section~\ref{sect:class} to compute the class of the $G$-action on an equivariant compactification of $T$ in the Burnside group. The result is: 

\begin{prop}
\label{prop:class-dp6} 
The class in $\Burn_2(G)$ of the $G$-action on $X$ is: 
\begin{align*}
[X\actsfromright G]=(\mathrm{triv}&,G\actsfromleft k(X),())\\
+&(\fS_2,C_2\actsfromleft k(\bP^1),(1)) \\
+&(\text{diagonal in $C_2\times \fS_2$},C_2\actsfromleft k(\bP^1),(1))\\
+&{\color{red}(C_2,\fS_3\actsfromleft k(\bP^1),(1))}  \\
+&(C_2,\fS_2\actsfromleft k(\bP^1),(1))\\
+2&(C_2\times \fS_2,\mathrm{triv}\actsfromleft k,(e_1,e_2))\\
+2&(C_2\times \fS_2,\mathrm{triv}\actsfromleft k,(e_1+e_2,e_2))\\
+&(C_2\times C_3,\mathrm{triv}\actsfromleft k,((0,1),(1,1)))\\
+&
(C_3,\mathrm{triv}\actsfromleft k,(1,1)).
\end{align*}
\end{prop}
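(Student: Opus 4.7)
My plan is to apply the algorithm of Theorem~\ref{thm.main} to the $G$-action on $T$. The first task is to identify the lattice $\cL' = \cL'(T)$, which beyond $\{1\}$ (for $T$) and $G$ (for $\{1\}$) contains two $G$-conjugacy classes of order-two distinguished subgroups: the three ``diagonal'' subgroups $\langle c\tau_{jk}\rangle$, each a generic stabilizer of the 1-dimensional subtorus $T_i = \{t_i=1\}\cap T$ (with cocharacter $\pm(e_j-e_k)$), and the three ``transposition'' subgroups $\langle\tau_{jk}\rangle$, each a generic stabilizer of $S_i = \{t_j=t_k\}\cap T$ (with cocharacter $\pm(e_j+e_k-2e_i)$); here $c$ is the central inversion and $\tau_{jk}$ swaps indices $j$ and $k$. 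I take $\Sigma$ to be the star subdivision of the hexagonal fan of $dP_6$, using all twelve rays $\pm(e_j-e_k), \pm(e_j+e_k-2e_i)$; this satisfies property (E) for all six 1-dimensional subtori and meets the third criterion of Section~\ref{sect:cm} since the twelve rays form two distinct $G$-orbits. The torsion parameter $r$ must be a multiple of $6$, so that $T[r]$ contains both the 2-torsion fixed by $c$ and the 3-torsion diagonal points $(\omega^i,\omega^i,\omega^i)$ where all three $\overline{S_j}$ meet -- these latter points must be blown up for the De Concini--Procesi model $\mathbb X_{\Sigma,\cL',[r]}$ to be in standard form by Proposition~\ref{prop.standardmodel}.

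I then apply Proposition~\ref{prop.mainformula}. For the term $A$: the zero cone gives the generic symbol $(\mathrm{triv},G\actsfromleft k(X),())$, and every other cone of $\Sigma$ has a stabilizer acting by inversion on the residual torus $T^\sigma$, so $\Stab(\sigma)_\delta$ is trivial and the corresponding symbol vanishes (either trivially, since empty or single-character sequences over $\{1\}$ force trivial characters, or by Proposition~\ref{prop:s-vanishing}). For the term $B$, the relevant chains in $\cL'\setminus\{\{1\}\}$ are of lengths one and two (the two non-trivial $C_2$-conjugacy classes are incomparable, so no length-three chain exists). The four divisor symbols arise from length-one chains: the $\overline{T_i}$-orbit gives the diagonal symbol and the $\overline{S_i}$-orbit gives the $\fS_2$-symbol (in each case the 2-torsion translate component has trivial generic stabilizer and contributes zero); the identity exceptional gives the red symbol $(C_2,\fS_3\actsfromleft k(\bP^1),(1))$ (with $\fS_3$ acting via its standard representation on the $\bP^1$ and $c$ acting trivially); and the $\fS_3$-orbit of exceptionals over the non-identity 2-torsion gives $(C_2,\fS_2\actsfromleft k(\bP^1),(1))$. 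The codimension-two point symbols come from the naive classes of exceptional divisors -- the two orbits of transposition-fixed points on the identity exceptional give $(C_2\times\fS_2,\mathrm{triv}\actsfromleft k,(e_1,e_2))$ and the $\fA_3$-fixed orbit gives $(C_2\times C_3,\mathrm{triv}\actsfromleft k,((0,1),(1,1)))$; analogous orbits on the 2-torsion exceptionals together with length-two chain intersections (proper transforms of $\overline{T_i}, \overline{S_i}$ meeting exceptional divisors) give the remaining $(C_2\times\fS_2,\mathrm{triv}\actsfromleft k,(e_1+e_2,e_2))$ contributions; and the $\fA_3$-fixed orbit on the exceptionals over $(\omega,\omega,\omega), (\omega^2,\omega^2,\omega^2)$ gives $(C_3,\mathrm{triv}\actsfromleft k,(1,1))$. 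Tangent characters are extracted using Lemma~\ref{lem.divisors} together with the rewriting maps $\tau_{I,J}$ and $\psi_{I,J}$.

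The main obstacle will be the meticulous bookkeeping: enumerating $G$-orbits of cones, of torsion-translate components of each $\overline{T_i}$ and $\overline{S_i}$, and of special fixed points on each family of exceptional divisors; computing the tangent characters at each codimension-two stratum including the $\cO(-1)$-pullbacks prescribed by Lemma~\ref{lem.divisors}; and checking that the contributions from higher torsion in $T[r]$ (notably from 6-torsion points not on any $\overline{S_i}$ or $\overline{T_i}$) either vanish by Proposition~\ref{prop:s-vanishing} or cancel via relations (B1), (B2). The nontrivial red symbol $(C_2,\fS_3\actsfromleft k(\bP^1),(1))$ survives intact and is precisely the piece that obstructs linearizability, recovering the classical nontriviality result of \cite{isk-s3}.
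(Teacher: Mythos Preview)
Your approach is the same as the paper's: identify $\cL'$, choose the $12$-ray fan $\Sigma$ and $r=6$, form the De~Concini--Procesi model, and compute $A+B$ via Proposition~\ref{prop.mainformula} and the recursion of Theorem~\ref{thm.main}. The overall structure is right, but the attribution of the codimension-two symbols to specific strata is garbled, and if carried out as written would not produce the stated coefficients.

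The main issue is your accounting for the two coefficient-$2$ terms. You write that ``the two orbits of transposition-fixed points on the identity exceptional give $(C_2\times\fS_2,\mathrm{triv}\actsfromleft k,(e_1,e_2))$''. In fact, on the exceptional $\bP(V)$ over $(1,1,1)$, the two $G$-orbits of $\mathfrak K_4$-fixed points are the intersection with the proper transform of (a representative) $\overline{S_i}$ and of $\overline{T_i}$, and they contribute \emph{different} symbols: one gives $(e_1,e_2)$ and the other $(e_1+e_2,e_2)$. Each of these occurs with multiplicity~$1$ from the identity exceptional. The second copy of each, yielding the coefficient~$2$, comes from the analogous length-two chain over the \emph{other} $\mathfrak K_4$-stabilized point $\bar\tau=(-1,-1,1)$, not from a second orbit on the identity exceptional. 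In the language of Theorem~\ref{thm.main}, both coefficient-$2$ terms are purely $t=2$ contributions, indexed by the chains $\{G\supset\langle(0,(1,2))\rangle\}$ and $\{G\supset\langle(1,(1,2))\rangle\}$ respectively, each summed over the two $\mathfrak K_4$-orbits $\{(1,1,1)\}$ and $\{(-1,-1,1)\}$ of $T[6]$.

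Two smaller points. First, for the boundary cones you do not need Proposition~\ref{prop:s-vanishing}: the generic stabilizer $\Stab(\sigma)_\delta$ is trivial for the one-dimensional cones (since $\Stab(\sigma)\cong\Z/2\Z$ acts faithfully on $T^\sigma\cong\G_m$) and for the two-dimensional cones $\Stab(\sigma)$ itself is trivial, so these symbols are zero by definition. Second, the vanishing of the remaining $6$-torsion contributions is not a matter of (B1)/(B2) cancellation; the other $G$-orbits in $T[6]$ have $G$-stabilizer of order $\le 2$, too small to produce a nontrivial symbol after applying $\psi_{\{1\}}$ (or the terms are already absorbed as subtractions in the recursive step for the $t=1$ contributions from the subtori $\overline{S_i},\overline{T_i}$).
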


For comparison, we display the class of the linear action
$$
[\bP(1\oplus V_{\chi})\actsfromright G]\in \Burn_2(G),
$$
computed in 
\cite[Exa.~5.3]{KT-struct}; 
here $V_{\chi}$ is the standard 
2-dimensional representation of $\fS_3$, twisted by
a nontrivial character of the central $C_2$.  
This, in essence the only possible linear action, yields the class
\begin{align*}
& (\mathrm{triv},G\actsfromleft k(\bP(1\oplus V_{\chi})),())\\ 
%(\mathrm{triv},G\actsfromleft k(\bP^1)(t),\emptyset)
+&(\fS_2,C_2\actsfromleft k(\bP^1),(1))\\
+2&{\color{red} (C_2,\fS_3\actsfromleft k(\bP^1),(1))} \\
+2&
(C_2\times \fS_2,\mathrm{triv}\actsfromleft k,(e_1,e_2))\\
+2&(C_2\times\fS_2,\mathrm{triv}\actsfromleft k,(e_1+e_2,e_2))\\
+&(C_2\times C_3,\mathrm{triv}\actsfromleft k,((0,1),(1,1)))\\
+&(C_2\times C_3,\mathrm{triv}\actsfromleft k,((0,1),(1,2))).
\end{align*}
The term
\begin{equation}
\label{eqn:symb}
(C_2,\fS_3\actsfromleft k(\bP^1),(1))
\end{equation}
is {\em incompressible} (see Defn. 3.3 and Prop.~3.6 in  \cite{KT-vector}). It appears with different coefficients in the two expressions above.
By \cite[Prop.~3.4]{KT-vector}, the actions are not equivariantly birational. However, there are other terms in these formulas that distinguish the two actions, e.g., the terms with $C_2\times C_3$-stabilizer.

Recall that $X\times \bP^r$ and $\bP(1\oplus V_{\chi})\times \bP^r$, with 
trivial $G$-action on $\bP^r$, are equivariantly birational for $r\ge 2$, by \cite[Prop. 9.11]{lemire}. The case of $r=1$ is unknown. Computing  $\Burn_3(G)$ we find nothing to distinguish the classes 
$$
[X\times \bP^1\actsfromright G], [\bP(1\oplus V_{\chi})\times \bP^1\actsfromright G] \in \Burn_3(G).
$$ 
Indeed, all contributions from nontrivial stabilizers vanish in $\Burn_3(G)$. For terms with $C_2$- and $C_3$-stabilizers this follows immediately from Proposition~\ref{prop:s-vanishing}, while for terms with $C_2\times C_3$-stabilizer this requires further analysis.

\medskip

The rest of this section consists of a proof of Proposition ~\ref{prop:class-dp6}. It is an application of the algorithm from Section \ref{sect:class}, which we carry out in detail.
We have: 
\begin{itemize}
\item $T=\Delta$,
\item $\bar{\delta} = 1$, 
\item $G_{\bar{\delta}} = G$, $G_{\delta} = \mathrm{triv}$, 
\end{itemize}
and the formula from Proposition \ref{prop.mainformula} simplifies to
\[ [X\actsfromright G]=A+B. \]
We use the coordinates $t_1$ and $t_2$ to identify $T$ with $\G_m^2$, and recover the action of Example \ref{exam.notstabilizer}.
There is a corresponding basis $e_1$, $e_2$ of $N\cong \Z^2$.

\medskip
\noindent
{\em Step 1.} 
We compute $\cL'=\cL'(T)$, the lattice of distinguished subgroups of $G$, associated with subtori of $T$:
\[
\begin{array}{c|c}
\Gamma'&T_{\Gamma'} \\ \hline
G & \{(1,1,1)\} \\
\langle (0,(1,2))\rangle & \{(t,t,t^{-2})\} \\
\langle (0,(1,3))\rangle & \{(t,t^{-2},t)\} \\
\langle (0,(2,3))\rangle & \{(t^{-2},t,t)\} \\
\langle (1,(1,2))\rangle & \{(t,t^{-1},1)\} \\
\langle (1,(1,3))\rangle & \{(t,1,t^{-1})\} \\
\langle (1,(2,3))\rangle & \{(1,t,t^{-1})\} \\
\mathrm{triv} & T
\end{array}
\]

\medskip
\noindent
{\em Step 2}: We construct a smooth projective $G$-invariant fan, with respect to which property $(\mathrm{E})$ holds for every $T_{\Gamma'}$; this has rays generated by
\[ (1,0), (1,1), (0,1), (-1,0), (-1,-1), (0,-1). \]

\medskip
\noindent
{\em Step 3}: We subdivide to obtain a fan $\Sigma$ satisfying the additional property, that no pair of rays in a single $G$-orbit spans a cone of $\Sigma$;
the ray generators are
\begin{align*}
&(1,0), (2,1), (1,1), (1,2), (0,1), (-1,1), (-1,0), (-2,-1), \\
&\qquad\qquad\qquad\qquad\qquad\qquad (-1,-1), (-1,-2), (0,-1), (1,-1).
\end{align*}

\medskip
\noindent
{\em Step 4.} We find a positive integer $r$, such that the stabilizer locus in $T$ is in the union of the $r$-torsion translates of subtori in $\cG_{\cL'}$, and the same holds for the $\Stab(\sigma)$-action on $T^\sigma$, for all $\sigma\in \Sigma$.
In $T$, the stabilizer locus consists of the one-dimensional subtori above, together with
\[
(1,1,1),
(-1,-1,1),(-1,1,-1),(1,-1,-1),
(\zeta,\zeta,\zeta),(\zeta^2,\zeta^2,\zeta^2).
\]
For $\sigma\in \Sigma(1)$ we have $\Stab(\sigma)\cong \Z/2\Z$ acting on $T^\sigma\cong \G_m$, fixing $\pm 1$.
So we take
\[ r=6. \]

\medskip
\noindent
{\em Step 5.}
We carry out the De Concini-Procesi blow-up procedure, which in this case amounts to blowing up the $6$-torsion of $T$ in $X=X_\Sigma$ to obtain
\[ X_{\Sigma,\cL',[6]}\cong B\ell_{\text{$36$ points}}X. \]

\medskip
\noindent
{\em Step 6.}
We compute $A$ directly as
\[ A=(\mathrm{triv},G\actsfromleft k(X),()). \]
This is the contribution from the zero cone in the formula from $\bm{\Sigma}_1$ in Proposition \ref{prop.mainformula}.
The two orbits of $1$-dimensional cones $\sigma$ lead to action of $\Stab(\sigma)\cong \Z/2\Z$ on $T^\sigma\cong \G_m$ with trivial generic stabilizer, hence no contribution to the equivariant Burnside group.
As well there is no contribution from the $2$-dimensional cones, which form a single orbit with trivial stabilizer.

\medskip
\noindent
{\em Step 7.}
We compute $B$ by the procedure of Theorem \ref{thm.main}.
We have $\ell=3$:
$$
\mathbb D = \mathbb D_1\cup \mathbb D_2\cup \mathbb D_3  \subset X_{\Sigma, \cL', [6]},
$$
with respective conjugacy class representatives of $\cL'\setminus\{\mathrm{triv}\}$ from the table in Step 1:
\[ \Gamma'_1=G,\qquad \Gamma'_2=\langle(0,(1,2))\rangle,\qquad \Gamma'_3=\langle(1,(1,2))\rangle. \]
Together, $\Gamma'_2$ and $\Gamma'_3$ generate $C_2\times \fS_2\cong \mathfrak K_4$.

%the three $G$-invariant divisors correspond to the conjugacy class representatives $\Lambda_i$, $i=1,\ldots, 3$, of elements of $\mathcal L'\setminus \{ \ker(\nu)\}$ :
%
%For $i=2,3$, we put
%$$
%N_{i,\bar{\tau}}:=N_{G}(\Lambda_i)_{\bar{\tau}}. 
%$$
With $\mathcal I=\{ 1, 2, 3\}$
we have nonempty $D_I$ 
corresponding to the following subsets $I\subseteq \mathcal I$:
$$
\{1\}, \quad \{2\},\quad \{3\},\quad \{ 1, 2\}, \quad \{ 1, 3\}. 
$$
For each $I$, there is
exactly one conjugacy class of chains, with
representative
\[ \{\Gamma'_i\,|\,i\in I\}. \]
We list representative chains $\Lambda$, with corresponding $N_G(\Lambda)$ and $N_G(\Lambda)$-orbits of $T'_{[6]}/T'$:
\[
\begin{array}{c|c|c|c|c}
\Lambda&N_G(\Lambda)&T'&T'_{[6]}/T'&\text{$N_G(\Lambda)$-orbits} \\ \hline
G & G & \{1\} & T[6] & \text{see Table \ref{orbits}} \\
\langle (0,(1,2))\rangle & \mathfrak K_4 & \{(t,t,t^{-2})\} & \text{$\mu_6$ (via $t_1^{-1}t_2$)} & \text{see below} \\
\langle (1,(1,2))\rangle & \mathfrak K_4 & \{(t,t^{-1},1)\} & \text{$\mu_6$ (via $t_1t_2$)} & \text{see below} \\
G \supset \langle (0,(1,2))\rangle & \mathfrak K_4 & \{1\} & T[6] & \text{see Table \ref{orbits}} \\
G \supset \langle (1,(1,2))\rangle & \mathfrak K_4 & \{1\} & T[6] & \text{see Table \ref{orbits}}
\end{array}
\]
When $T'$ has dimension $1$, we identify $T'_{[6]}/T'$ with $\mu_6$ by the indicated coordinate function.
The action by $N_G(\Lambda)=\mathfrak K_4$ has orbits
\[ \{1\},\quad \{-1\},\quad \{\zeta,\zeta^2\},\quad \{-\zeta,-\zeta^2\}. \]
When $\Lambda=\{\langle(0,(1,2))\rangle\}$, the elements in orbits of size $2$ have stabilizer $\langle(1,(1,2))\rangle$.
When $\Lambda=\{\langle(1,(1,2))\rangle\}$, the elements in orbits of size $2$ have stabilizer $\langle(0,(1,2))\rangle$.

\begin{table}
\[
\begin{array}{l|l}
\text{stabilizer}&\text{orbit [$G$-stabilizer]} \\ \hline
\mathfrak K_4&(1,1,1)\,[G]\\
&(-1,-1,1)\,[\mathfrak K_4]\\ \hline
\langle(0,(1,2))\rangle&(\zeta,\zeta,\zeta)\,(\zeta^2,\zeta^2,\zeta^2)\,[\fS_3] \\
&(-\zeta,-\zeta,\zeta)\,(-\zeta^2,-\zeta^2,\zeta^2)\, [\langle(0,(1,2))\rangle] \\
\langle(1,(1,2))\rangle &(\zeta,\zeta^2,1)\,(\zeta^2,\zeta,1)\,[\langle(1,(1,2))\rangle] \\
&(-\zeta,-\zeta^2,1)\,(-\zeta^2,-\zeta,1)\,[\langle(1,(1,2))\rangle] \\
\langle(1,\mathrm{id})\rangle & (1,-1,-1)\,(-1,1,-1) \\ \hline
\mathrm{triv} & (\zeta,-\zeta,-\zeta)\,(\zeta^2,-\zeta^2,-\zeta^2)\,(-\zeta,\zeta,-\zeta)\,(-\zeta^2,\zeta^2,-\zeta^2) \\
& (1,\zeta,\zeta^2)\,(1,\zeta^2,\zeta)\,(\zeta,1,\zeta^2)\,(\zeta^2,1,\zeta) \\
&(1,-\zeta,-\zeta^2)\,(1,-\zeta^2,-\zeta)\,(-\zeta,1,-\zeta^2)\,(-\zeta^2,1,-\zeta) \\
&(\zeta,-\zeta^2,-1)\,(\zeta^2,-\zeta,-1)\,(-\zeta^2,\zeta,-1)\,(-\zeta,\zeta^2,-1)\,[\mathrm{triv}] \\
&(-1,\zeta,-\zeta^2)\,(-1,\zeta^2,-\zeta)\,(\zeta,-1,-\zeta^2)\,(\zeta^2,-1,-\zeta) \\
&(-1,-\zeta,\zeta^2)\,(-1,-\zeta^2,\zeta)\,(-\zeta,-1,\zeta^2)\,(-\zeta^2,-1,\zeta)
\end{array}
\]
\caption{Orbits of $T[6]$ under $\mathfrak K_4\subset G$.
Orbits under $G$ are unions of $\mathfrak K_4$-orbits; for each a representative is identified, with $G$-stabilizer displayed in brackets $[\,]$.}
\label{orbits}
\end{table}

As indicated in Theorem \ref{thm.main}, we start the computation of $B$ by looking at contributions with $t=2$; for these, we have
$\mathbb D^\circ_\Lambda=\mathbb D_\Lambda$.

\begin{itemize}
\item 
$\Lambda=\{G\supset \langle(0,(1,2))\rangle\}$:
We have $N_G(\Lambda)=\mathfrak K_4$, with
\[ V_1=0\qquad\text{and}\qquad V_2=k\cdot(1,1). \]
Following Lemma \ref{lem.DLambda}, we have
\[
[D_\Lambda\bar\tau\actsfromright (\mathfrak K_4)_{\bar\tau}]_{(\cO(-1))}=
[\{1\}\bar\tau\times \bP(V_2/V_1)\times \bP(V/V_2)\actsfromright (\mathfrak K_4)_{\bar\tau}]_{(\cO(-1))},
\]
as a point, with pair of characters $e_1$ determined by $V_2/V_1$, and $e_1+e_2$ determined by $V/V_2$.
So, \eqref{eqn.convention} gives $e_1$ and $e_2$ as characters of
$(\cO(-1))$.
Applying $\psi_{\{1,2\}}$ to get an element of $\Burn_2((\mathfrak K_4)_{\bar\tau})$, we only get something nontrivial when $(\mathfrak K_4)_{\bar\tau}={\mathfrak K_4}$.
There are two contributions from Table \ref{orbits};
when we apply induction to $\Burn_2(G)$ we obtain
\begin{align*}
\psi_{\{1,2\}}\big(&[D_\Lambda\bar\tau\actsfromright (\mathfrak K_4)_{\bar\tau}]_{(\cO(-1))}\big)=\\
&\begin{cases} (C_2\times\fS_2,\mathrm{triv}\actsfromleft k,(e_1,e_2)), & \text{if $(\mathfrak K_4)_{\bar\tau}={\mathfrak K_4}$},\\ 0, & \text{otherwise}.\end{cases}
\end{align*}
\item $\Lambda=\{G\supset \langle(1,(1,2))\rangle\}$: The computation is similar, with 
$$
V_2=k\cdot(1,-1),
$$
and we obtain
\begin{align*}
\psi_{\{1,2\}}\big(&[D_\Lambda\bar\tau\actsfromright (\mathfrak K_4)_{\bar\tau}]_{(\cO(-1))}\big)=\\
&\begin{cases} (C_2\times\fS_2,\mathrm{triv}\actsfromleft k,(e_1+e_2,e_2)), & \text{if $(\mathfrak K_4)_{\bar\tau}={\mathfrak K_4}$},\\ 0, & \text{otherwise}.\end{cases}
\end{align*}
\end{itemize}

We proceed to cases with $t=1$.

\begin{itemize}
\item $\Lambda=\{G\}$: We have
$V_1=0$.
So,
\begin{align*}
\psi_{\{1\}}\big(&[D^\circ_\Lambda\bar\tau\actsfromright G_{\bar\tau}]_{(\cO(-1))}\big) \\
=
&\begin{cases}
\psi_{\{1\}}\big([D_\Lambda\bar\tau\actsfromright G_{\bar\tau}]_{(\cO(-1))}\big)-\rC_0-\rC_1,
&\text{if $G_{\bar\tau}\supseteq \mathfrak K_4$},\\
\psi_{\{1\}}\big([D_\Lambda\bar\tau\actsfromright G_{\bar\tau}]_{(\cO(-1))}\big),&\text{if $|G_{\bar\tau}|\in\{1,2,6\}$},
\end{cases}
\end{align*}
where for $i\in \{0,1\}$,
\[
\rC_i:=\mathrm{ind}_{\mathfrak K_4}^{G_{\bar\tau}}\big(\psi_{\{1,2\}}\big([D_{\{G\supset \langle(i,(1,2))\rangle \}}\bar\tau\actsfromright \mathfrak K_4]_{(\cO(-1))}\big)\big);
\]
by Lemma \ref{lem.DLambda}, we have
\[
[D_\Lambda\bar\tau\actsfromright G_{\bar\tau}]_{(\cO(-1))}=
[\{1\}\bar\tau\times \bP(V)\actsfromright G_{\bar\tau}]_{(\cO(-1))}.
\]
The nontrivial contributions come from three values of $\bar\tau$.
When $\bar\tau=(1,1,1)$, we get
\begin{align*}
\psi_{\{1\}}\big(&[D^\circ_\Lambda\bar\tau\actsfromright G_{\bar\tau}]_{(\cO(-1))}\big)=
(C_2,\fS_3\actsfromleft k(\bP^1),(1))\\
&\qquad+(C_2\times C_3,\mathrm{triv}\actsfromleft k,((0,1),(1,1))).
\end{align*}
The cases $\bar\tau=(-1,-1,1)$ and $\bar\tau=(\zeta,\zeta,\zeta)$ give
\[
(C_2,\fS_2\actsfromleft k(\bP^1),(1)),\quad\text{respectively}\quad(C_3,\mathrm{triv}\actsfromleft k,(1,1)).
\]
\item $\Lambda=\{\langle (0,(1,2))\rangle \}$:
The only nontrivial contribution is from
$\bar\tau=1$.
Then,
\begin{align}
\begin{split}
\label{eqn.withcoefficienttwo}
\psi_1\big([&D_\Lambda\bar\tau\actsfromright \mathfrak K_4]_{(\cO(-1))}\big)
=(\fS_2,C_2\actsfromleft k(\bP^1),(1)) \\
&\qquad\qquad+2(\mathfrak K_4,\mathrm{triv}\actsfromleft k,(e_1,e_1+e_2))\in \Burn_2(\mathfrak K_4).
\end{split}
\end{align}
We get $\psi_1\big([D^\circ_\Lambda\bar\tau\actsfromright \mathfrak K_4]^{\mathrm{naive}}_{(\cO(-1))}\big)$,
according to Theorem \ref{thm.main}, by subtracting contributions from
\[ \Lambda''=\{G,\langle 0,(1,2)\rangle\},\quad \bar\tau''\in\{(1,1,1),(-1,-1,1),(\zeta,\zeta,\zeta),(-\zeta,-\zeta,\zeta)\}. \]
When $\bar\tau''\in\{(1,1,1),(-1,-1,1)\}$, we apply $\tau_{\{1,2\},\{2\}}$ to the
indexed equivariant Burnside group element
\[ (\mathfrak K_4\subseteq \mathfrak K_4,\mathrm{triv}\actsfromleft k,(),(e_1,e_2))
\in \Burn_{2,\{1,2\}}(\mathfrak K_4) \]
to yield, in each case, weights $e_1$ and $e_1+e_2$,
thereby cancelling the term in \eqref{eqn.withcoefficienttwo} with coefficient $2$.
When $\bar\tau''\in\{(\zeta,\zeta,\zeta),(-\zeta,-\zeta,\zeta)\}$ the
contribution is trivial in $\Burn_2(\mathfrak K_4)$.
So
\[
\psi_1\big([D^\circ_\Lambda\bar\tau\actsfromright \mathfrak K_4]^{\mathrm{naive}}_{(\cO(-1))}\big)
=(\fS_2,C_2\actsfromleft k(\bP^1),(1)).
\]
\item $\Lambda=\{\langle (1,(1,2))\rangle \}$: The computation is similar.
There is only a nontrivial computation for $\bar\tau=1$, and we obtain
\[
\psi_1\big([D^\circ_\Lambda\bar\tau\actsfromright \mathfrak K_4]^{\mathrm{naive}}_{(\cO(-1))}\big)
=(\text{diagonal in $C_2\times \fS_2$},C_2\actsfromleft k(\bP^1),(1)).
\]
\end{itemize}

Combining the contributions, we obtain the formula in the statement of Proposition~\ref{prop:class-dp6}.

\section{Dimension 3}
\label{sect:dim3}

In this section, we analyze 3-dimensional tori, following \cite{kun}.
This is the smallest dimension where cohomology can obstruct rationality and linearizability. 
We have two motivating problems:
\begin{itemize}
\item Find nonlinearizable actions with vanishing $\rH^1(G',\Pic(X))$, for all $G'\subseteq G$, this is the analogue of \cite{isk-s3}.
\item Investigate the relation between 
$$
[X\actsfromright G]\quad \text{ and  } \quad \rH^1(G',\Pic(X)),
$$
where $X$ is a smooth $G$-equivariant projective compactification of $T$ and $G'\subseteq G$. 
\end{itemize}

Any action on a torus $T=\mathbb G_m^3$ factors through a subgroup
of 
$$
C_2\times \fS_3\times C_2\quad \text{ or } \quad C_2\times \fS_4,
$$
and the second group admits 3 different actions, labeled C, S, and P in \cite{kun}. The first group is realized on a product of a del Pezzo surface of degree 6 with $\bP^1$, with the natural action 
of $G':=C_2\times \fS_3$ on the DP6 (described in Section~\ref{sect:2}) and $C_2$ on $\bP^1$. 
As mentioned in Section \ref{sect:2} and \cite[Rem. 9.13]{lemire}, it is already an open problem, whether the $G'$-action is linearizable.
%By Proposition~\ref{prop:stable-non}, 

The other actions are realized as follows \cite[Section 2]{kun}:
\begin{itemize}
\item[(C)] on $\bP^1\times \bP^1\times\bP^1$,
\item[(S)] on the blowup of $\bP^3$ in the four coordinate points and the six lines through these points; here $\fS_4$ permutes the coordinates in $\bP^3$ and $C_2$ is the Cremona involution;
\item[(P)] on the (singular) hypersurface
$$
\{ x_1x_2x_3x_4=y_1y_2y_3y_4 \} \subset (\bP^1)^4,
$$
where $\fS_4$ acts by permuting  the factors and $C_2$ switches $x_i$ and $y_i$, for all $i$.  
\end{itemize}

The $G=C_2\times \fS_4$-action in type (C) is linearizable. The following proposition covers the types (S) and (P), see \cite[Fig. 4]{kun}.

\begin{prop}
\label{prop:kun}
Assume that $T$ admits the action of the Klein group $G:=\bZ/2\oplus \bZ/2$ such that  
$G\subset\GL(M)$ is generated by
$$
\begin{pmatrix}
-1 & 0 & 0 \\
0 & 0& -1 \\
0& -1& 0
\end{pmatrix}, \quad 
\begin{pmatrix}
-1 & -1 & -1 \\
0 & 0& 1 \\
0& 1& 0
\end{pmatrix}. 
$$
%$$
%\sigma_{3}:=\sigma_1\cdot \sigma_2 = 
%\begin{pmatrix}
%1 & 1 & 1 \\
%0 & -1 & 0 \\
%0  & 0 & -1 
%\end{pmatrix}
%$$
Let $X$ be a smooth projective $G$-equivariant compactification of $T$. 
Then
\begin{enumerate}
\item $\rH^1(G, \Pic(X)) = \bZ/2$,
and the action is not stably linearizable, 
\item $[X\actsfromright G]=(\mathrm{triv},G\actsfromleft k(X),())$ in $\Burn_3(G)$.
\end{enumerate}
\end{prop}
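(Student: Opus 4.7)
For part $(1)$, since $\rH^1(G,\Pic(X))$ is independent of the choice of smooth projective $G$-equivariant compactification $X$, I would compute it with a convenient fan $\Sigma$, via the short exact sequence of $G$-modules
$$0\to M\to \bZ^{\Sigma(1)}\to \Pic(X_\Sigma)\to 0.$$
For a $\Sigma$ whose rays fall into free $G$-orbits, the middle group is cohomologically acyclic, and the computation reduces to the coboundary map $\rH^0(G,\bZ^{\Sigma(1)})\to \rH^1(G,M)$; the result $\rH^1(G,\Pic(X))=\bZ/2$ is obtained by tracing the cocycle. Alternatively, the representation embeds into one of the $(S)$- or $(P)$-type $C_2\times\fS_4$-actions analyzed in \cite[Fig.~4]{kun}, yielding the same cohomology. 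Non-vanishing is the classical obstruction to stable linearizability, \cite[Prop.~2.2]{BogPro}.

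For part $(2)$, the approach mirrors the dimension-two computation of Section~\ref{sect:2}. First I determine the lattice $\cL'$ of distinguished subgroups associated with subtori. Direct matrix calculation gives
$$N^{g_1}=\bZ(e_2-e_3),\quad N^{g_2}=\bZ(e_2+e_3),\quad N^{g_1g_2}=\bZ(2e_1+e_2+e_3),\quad N^G=0,$$
so
$$\cL'=\{\mathrm{triv},\langle g_1\rangle,\langle g_2\rangle,\langle g_1g_2\rangle,G\},$$
associated respectively with $T$, three distinct $1$-dimensional subtori $T_{g_i}$, and the trivial subtorus $T_G=\{1\}$. The full stabilizer locus lies in the union of these subtori and their $2$-torsion translates, so I take $r=2$. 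A $G$-invariant smooth projective fan $\Sigma$ with property $(\mathrm{E})$ for all subtori in $\cG_{\cL'}$, and with no pair of $G$-orbit related rays spanning a cone, is produced following Section~\ref{sect:ec}.

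Applying Proposition~\ref{prop.mainformula}, the $\sigma=\{0\}$ contribution to $A$ is exactly the desired symbol $(\mathrm{triv},G\actsfromleft k(X),())$, and it remains to verify that every other term of $A+B$ vanishes. The crucial observation is that every non-trivial subgroup of $G$ has exponent $2$, so every character appearing in a non-trivial-stabilizer symbol has order $2$. By Proposition~\ref{prop:s-vanishing}, such a symbol $(H,Y\actsfromleft K,\beta)\in\Burn_3(G)$ vanishes whenever $K$ admits a transcendental on which $Y$ acts trivially. A dimension count shows this is automatic for contributions from cones $\sigma$ of positive dimension in $A$, and for generic parts of strata $\mathbb D^\circ_\Lambda\bar\tau$ in $B$ whose quotient $Y$-action retains a trivial transcendental factor: the only exceptions are isolated strata concentrated over $G$-stable torsion orbits.

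The main obstacle is the treatment of these remaining isolated contributions, supported over torsion points such as $(1,1,1)\in T$, where $K=k$ and $|\beta|=3$, placing the symbols outside the direct reach of Proposition~\ref{prop:s-vanishing}. For each chain $\Lambda\subseteq\cL'\setminus\{\mathrm{triv}\}$ (of length one or two by inspection of $\cL'$) and each torsion representative, Lemma~\ref{lem.DLambda} identifies the stratum class with that of a product $X'\bar\tau\times\bP(V_2/V_1)\times\cdots\times\bP(V/V_t)$, and Lemma~\ref{lem.divisors} records the weight tuple of $\beta$ via the $G$-representation on $N\otimes k$. Using the explicit form of the matrices, I would verify in each case that the weight tuple either contains a repeated entry (so relation $\mathbf{(B2)}$ collapses the symbol after re-ordering by $\mathbf{(O)}$) or has a vanishing sub-sum, invoking \cite[Prop.~4.7]{BnG}. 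The three-fold symmetry arising from the triple $T_{g_1},T_{g_2},T_{g_1g_2}$, each paired with a $\bZ/2$-action inverting the parameter, should furnish the necessary identities among weights across every chain, completing the proof.
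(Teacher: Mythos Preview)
Your approach is correct and would lead to a complete proof, but it is considerably more elaborate than the paper's. For part~(1) the paper simply cites \cite[Section~4]{kun}, exactly as you also suggest. For part~(2), however, the paper bypasses the De Concini--Procesi machinery entirely: rather than computing $\cL'$, choosing a fan, fixing $r$, and analyzing strata $\mathbb D^\circ_\Lambda\bar\tau$, it observes that since $G=(\bZ/2)^2$ has exponent~$2$, \emph{every} character occurring in a symbol with nontrivial stabilizer has order~$2$, and then invokes the argument of Proposition~\ref{prop:s-vanishing} to conclude that any such symbol vanishes in $\Burn_3(G)$. No model needs to be built.

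Your emphasis is also inverted relative to the actual difficulty. The case you single out as the ``main obstacle'' --- isolated contributions over torsion points with $K=k$ and $|\beta|=3$ --- is the most transparent one. Three nontrivial characters of $(\bZ/2)^2$ either contain a repeated pair (so a two-term subsum vanishes) or are exactly the three distinct nonzero elements of $(\bZ/2)^2$, whose sum is zero; in either case \cite[Prop.~4.7]{BnG} kills the symbol immediately, and the same pigeonhole handles $H=C_2$ with $|\beta|\ge 2$. No chain-by-chain weight computation is needed. The symbols that genuinely require Proposition~\ref{prop:s-vanishing} (or its proof) are those with $|\beta|\le 2$ and no vanishing subsum --- precisely the ones you dismiss with a ``dimension count''. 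Your plan still covers them, since the relevant $K$'s are rational function fields coming from toric strata, but that is where the content of the argument lies, not at the torsion points.
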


\begin{proof}
The first statement is one of the key results in \cite[Section 4]{kun}.
The second follows by an argument as in the proof of Proposition~\ref{prop:s-vanishing}: any 
symbol with a nontrivial stabilizer that arises, vanishes in $\Burn_3(G)$. 
\end{proof}

\bibliographystyle{plain}
\bibliography{burntoric}

\end{document}